\definecolor{mycolor}{rgb}{0.122, 0.435, 0.698}
\numberwithin{equation}{section}
\newtheorem{theorem}{Theorem}[section]
\newtheorem{lemma}[theorem]{Lemma}
\newtheorem{remark}[theorem]{Remark}
\newtheorem{definition}[theorem]{Definition}
\newtheorem{corollary}[theorem]{Corollary}
\newtheorem{assumption}[theorem]{Assumption}
\begin{document}

\title{Models of Chemotactic System by Einstein's Brownian Motion Method and its Analysis}
\author{Rahnuma Islam}
\author{Akif Ibragimov}
\affil{Department of Mathematics, Texas Tech University} 

\date{\today}

\maketitle


\begin{abstract}\label{abs}
We study the movement of the living organism in a band form towards the presence of chemical substrates based on a system of partial differential evolution equations. We incorporate Einstein's method of Brownian motion to deduce the chemotactic model exhibiting a traveling band. It is the first time that Einstein's method has been used to motivate equations describing the mutual interaction of the chemotactic system. We have shown that in the presence of limited and unlimited substrate, traveling bands are achievable and it has been explained accordingly. We also study the stability of the constant steady states for the system. The linearized system about a constant steady state is obtained under the mixed Dirichlet and Neumann boundary conditions. We are able to find explicit conditions for linear instability. The linear stability is established with respect to the $L^2$-norm, $H^1$-norm, and $L^\infty$-norm under certain conditions.
\end{abstract}


\section{Introduction}\label{introduction}
The celebrated work of Einstein's theory of Brownian motion \cite{Einstein05} offered the existence of discrete molecules that are too small to be seen through a microscope but the resulting motion should be visible through a microscope. In this theory, he argued that agitated particles in suspended water are the result due to collisions with molecules. Hence he constructed a model governing its motion concerning nearby particles.\par

`Chemotaxis' is a biological phenomenon by which organisms change their state of movement either toward or away from a chemical substance. This migration can be seen in cells ranging from bacteria to mammals. In 1966, Adler conducted an experiment with a capillary tube containing diluted bacteria and chemo-attractant \cite{Adler708}. Bacteria sense the higher gradient of chemo-attractants and move in that direction. During this process of movement towards the chemical gradient, in a detailed inspection, the motion created by each cell appears to be erratic. This randomicity arises not only from the chemotactic response but also from the random jumps of cells.  We argued that Einstein's theoretical framework of Brownian motion can describe the chemotactic response and random motion of an organism. \par

Based on Adler's observation of bands of migrating bacteria, there are many models have been formulated in the past few decades. One of the most fascinating and pioneer chemotaxis models was derived by Keller-Segel \cite{KELLER1971235}. Since then there are extensive research and various variants of Keller-Segel model have been constructed. Where Keller-Segel took a macroscopic approach to deduce the chemotaxis model, another way to model the chemotactic movement of mobile species is from a microscopic perspective. In \cite{stevens1997aggregation}, Othmer and Stevens used a continuous time, discrete-space random walk for the motion of bacteria cells on a one-dimensional lattice. Stevens discussed a stochastic many-particle system where interaction between particles is described by the chemotaxis system which can be interpreted by population densities \cite{stevens2000derivation}. Romanczuk, Erdmann, Engel, and Schimansky-Geier described a self-organized motion of bacteria using the concept of active Brownian particles \cite{romanczuk2008beyond}.\par

In this study, we wanted to adopt the microscopic approach and extend Einstein's random walk framework to derive such a model. Our assumptions involve the interactions between bacteria and the movement of bacteria toward substrates. We also discuss and analyze the aggregated mass of bacterial cells.\par

In Section~\ref{Einstein}, we will derive the chemotactic model motivated by Einstein's random walk model. We present exhibitions of traveling bands in two cases: an environment with an unlimited supply of food and a limited supply of food explained in Section~\ref{unlim}. Section~\ref{lstab} describes the linear instability criterion for constant steady-state solutions with homogeneous Dirichlet and Neumann boundary conditions and the linear stability on $\mathbb{L}^2$ and $L^\infty$ using the energy method.


\section{Derivation of the chemotactic system using Einstein's model}\label{Einstein}
This section explains the steps to incorporate Einstein's Brownian motion model to derive a chemotactic system. 

\subsection{Models with consumption or reaction term}
Let  $X=(x,t)$ is an observable point in space $x\in \mathbb{R}$ at time $t\in (0,\infty)$. Consider a space of observation for $t >0$ bounded by two planes $x$ and $x+ d x$ perpendicular to the $x$-axis. To formulate the partial differential equation (PDE) model, the existence of a time interval $\tau$ between the collision of two particles is required. The interval $\tau$ is ``sufficiently small'' compared to the time scale $t$ of observation of the physical process, but not so small that the motions become correlated. Denote $\Delta$ be the distance each particle makes during the time interval $(t, t+\tau)$ and $\varphi _{\tau}(\Delta)$ be the probability density function of non-collision. Suppose, $w(x,t)$ is the number of the particles (such as bacteria, glucose or predator, prey, etc) in the volume $[x,x+dx]$. We define the following basic properties:
\begin{definition}\label{axiom:Delta-e}(Expected value of the length of free jump)
\begin{equation*}
\Delta_{e}=\int \Delta\varphi _{\tau}(\Delta)d\Delta.
\end{equation*}
\end{definition}

\begin{definition}\label{sigma-b}(Standard variance of free jump)
\begin{equation*}
\sigma^2= \int(\Delta-\Delta_{e})^2 \varphi _{\tau}(\Delta) d \Delta.
\end{equation*}
\end{definition}

Then the number of particles found at time $t+ \tau$ between two planes perpendicular to the $x$-axis, with abscissas $x$ and $x+ d x$, is given by
\begin{align}\label{Eins_b_sys}
&\left(w(x, t+\tau)\right)\cdot dx=  \left(\underbrace{\mathbb{E}[w(x+\Delta,t)]}_{I_1} +\underbrace{w\frac{\partial \Delta_e}{ \partial x}}_{I_2}+\underbrace{\frac{1}{\tau}\int_{t}^{t+\tau} f(x,\xi)d\xi}_{I_3}\right)\cdot dx
\end{align}
Here,
$\mathbb{E}[w(x+\Delta, t)] = \int_{-\infty}^{\infty} w(x+\Delta,t) \varphi_{\tau}(\Delta) d\Delta$ 

In the right-hand side of Eq.~\eqref{Eins_b_sys}, the first term, $I_1$, describes the particle distribution due to random walk. The second term, $I_2$, explains the advective flux of particles dependent on the gradient of the expected length. And the last term, $I_3$, represents the birth or death of particles during $[t,t+\tau]$.

$\tau$, $\Delta$, and $\varphi _{\tau}(\Delta)$ can be functions of spatial distance $x$ and the time variable $t$ and of any other physical quantity such as density or the number of particles, etc. In our case, we will assume, for now, $\tau$ to be independent of the concentration of particles $w(x,t)$. And $\varphi _{\tau}(\Delta)$ is fixed with respect to $w(x,t)$.\par 

We add and subtract $w(x, t)$  on the right-hand side of the Eq.~\eqref{Eins_b_sys} and then we compute as follows
\begin{align}\label{add_sub}
&\big(w(x, t+\tau)- w(x, t)\big)\cdot dx \nonumber\\
&= \Bigg(\mathbb{E}[w(x+\Delta,t)] - \mathbb{E}w[(x,t)] +w\cdot\frac{\partial \Delta_e}{ \partial x}+\frac{1}{\tau}\int_{t}^{t+\tau} f(x,\xi)d\xi \Bigg)\cdot dx. 
\end{align}

Assume that $w(x,t)$ is four time  differentiable function on $\mathbb{R}$ and bounded, then $\big(\mathbb{E}[w(x+\Delta, t)]- w(x,t)\big) $ can be well approximated by formulae \cite{skor}
\begin{equation}\label{skor_lemma}
\big(\mathbb{E}[w(x+\Delta, t)]- w(x,t)\big) = \frac{1}{2}\sigma^2 \frac{\partial^2 w(x,t)}{\partial x^2}+\Delta_e \frac{\partial w(x,t)}{\partial x}.
\end{equation}

Using properties of the function $\varphi$ and applying Eq.~\eqref{skor_lemma} on \eqref{add_sub}, we get 
\begin{align}
\tau \frac{\partial w}{\partial t} =& \Delta_{e}\frac{\partial w}{\partial x}+w\cdot\frac{\partial  \Delta_{e}}{\partial x}+\frac{1}{2}\sigma^2 \frac{\partial^2 w}{\partial x^2}+\frac{1}{\tau}\int_{t}^{t+\tau} f(x,\xi)d\xi,
\end{align}
or, equivalently,
\begin{align}\label{ein-pde}
\tau \frac{\partial w}{\partial t} =& \frac{\partial \left(  w\cdot\Delta_{e}\right)}{\partial x}+ \frac{1}{2}\sigma^2\frac{\partial^2 w}{\partial x^2}+\frac{1}{\tau}\int_{t}^{t+\tau} f(x,\xi)d\xi.
\end{align}


\subsection{Models for chemotaxis systems}
Suppose $x$ is the distance along the tube and $t$ is the time. Denote $u(x,t)$ and $v(x,t)$ to be the concentration of bacteria and chemical substrate (food or any chemical attractor) per unit volume respectively.

\begin{figure}
    \centering
    \includegraphics[width=1\textwidth]{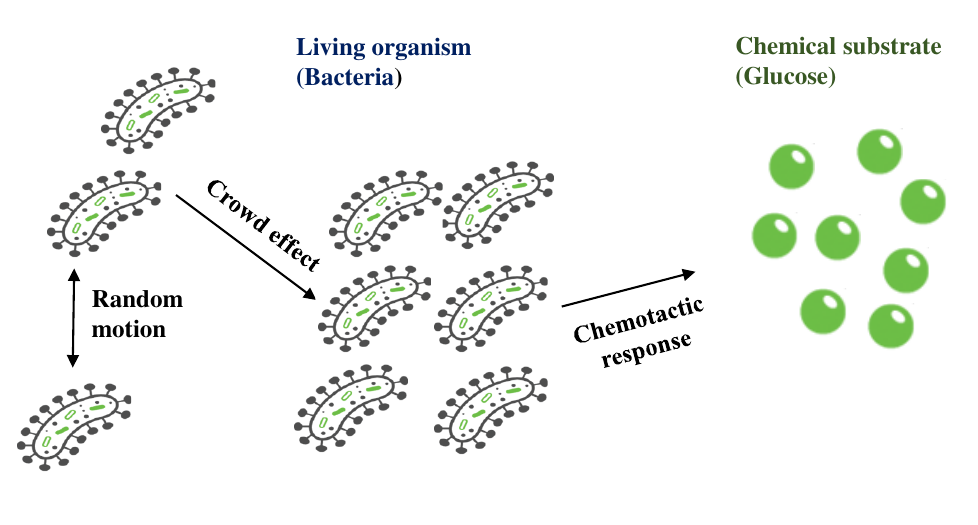}
    \caption{A virtual representation of the interactions between the organism (bacteria) and chemical substrates (Glucose).}
    \label{fig:bacteria_food}
\end{figure}

The corresponding expression of the Eq.~\eqref{ein-pde} for bacteria is
\begin{equation}\label{c_final ein der}
    \tau_u \frac{\partial u}{\partial t}- \frac{\partial \left(  u \cdot\Delta_{e,u}\right)}{\partial x}- \frac{1}{2}\sigma^2_{u}\frac{\partial^2 u}{\partial x^2}-\frac{1}{\tau_u}\int_{t}^{t+\tau_u} f_u(x,\xi)d\xi=0.
\end{equation}

The chemotactic response of the bacteria $u(x,t)$ in the medium can be influenced by the concentration of chemical substrate $v(x,t)$ and/or $\nabla v$. See the recent review \cite{chem}. We assume that the chemotactic response, which causes the event of movement of the organism towards food (or any attractor), is proportional to relative changes of $v$ in space with respect to the amount of food. Then by Definitions~\ref{axiom:Delta-e} and \ref{sigma-b}, $\Delta_{e,u}$ and $\sigma_{u}$ also depend on $v(x,t)$. Keller-Segel suggested a density-dependent sensitivity with a singularity at $v=0$ \cite{KELLER1971235}. Following the Keller-Segel assumption, we suggested the dynamics of directed movement characterized by the expected value of free jump  $\Delta_{e,u}$ as follows, 
\begin{equation}\label{Delab-def-A}
 \Delta_{e,u}(v) = -\beta(v) \frac{\partial v}{\partial x} = - \frac{\beta}{v} \frac{\partial v}{\partial x} = -\beta \frac{\partial \ln v}{\partial x}.
 \end{equation}
$\beta$ is a positive chemotactic coefficient having dimension $ [L^2]$.

Although in real life, the standard deviation is a composite parameter depending on $v, \ \nabla v, \ u, \nabla u, \ x, \ t, etc$, in this article, we consider the dynamics of processes with constant standard deviation. Namely,
\begin{equation}\label{sigma-def}
 \sigma^{2}_{u} (v)= \mu,   
\end{equation}
where $\mu$ is the motility parameter or diffusion coefficient of the organism with dimension $[L^2]$. 
Both $\mu$ and $\beta$  can be obtained from analyses of the dynamics of the process, using image processing. 

$f_u$ is the number of organisms that are born or die per unit volume. We will assume that 
 \begin{equation}\label{Fb-def}
\int_{t}^{t+\tau_{u}} f_u(x,\xi) d\xi= \tau_{u} g(u,v).
\end{equation}
Here $g(u,v)$ is the rate of born or death of the organism with dimension $[ \frac{1}{T}]$. Since the growth or reproduction of an organism happens on a large time scale and chemotaxis occurs on a very small time scale, we can ignore the growth term. Then $g(u,v)=0$ in Eq.~\eqref{b_sys} gives

\begin{equation}\label{b_sys}
         \tau_u \frac{\partial u}{\partial t} + \beta \frac{\partial}{\partial x}\left(u \frac{\partial \ln v}{\partial x}\right) -\frac{\mu}{2}\frac{\partial^2 u}{\partial x^2}= 0.
\end{equation}
 
 The first term on the right-hand side of Eq.~\eqref{b_sys} is the chemotactic response of the organism. 
 The second term is the change in the density of an organism due to random motion.  \par
 
And the concentration $v(x,t)$ of chemical substrates can be given by the equation
\begin{equation}\label{der_s_sys}
    \tau_v \frac{\partial v}{\partial t}- \frac{\partial \left( v \cdot\Delta_{e,v}\right)}{\partial x}- \frac{1}{2}\sigma^2_v\frac{\partial^2 v}{\partial x^2}-\frac{1}{\tau_v}\int_{t}^{t+\tau_v} f_v(x,\xi)d\xi=0.
\end{equation}

\begin{assumption} Food (chemical substrate) is considered to be immovable, so no chemical interaction between particles of substrates is possible under our assumption. Hence,
$$\Delta_{e,v}=0 \text{   and   } \sigma^{2}_{v} = D,$$
with $D$ being the diffusion constant of the chemical substrate. 
\end{assumption} 

\begin{assumption} $f_v$ is defined to be the consumption of substrate cells and
\begin{equation*}\label{cons-of-s}
\int_{t}^{t+\tau_{v}} f_v(x,\xi) d\xi= H(u,v)= - \tau_{v}k(v)u,
\end{equation*}
where $k(v)$ is the rate of consumption of the substrate with dimension $[\frac{1}{T}]$. 
\end{assumption}

Under assumptions, Eq.~\eqref{der_s_sys} can be written as
\begin{equation}\label{s_sys}
    \tau_{v} \frac{\partial v}{\partial t}  = -\tau_{v} k(u,v)u+ \frac{D}{2} \frac{\partial^2 v}{\partial x^2}.
\end{equation}

Hence the chemotactic model is
\begin{subequations}\label{main_mod}
\begin{align}
    \tau_u \frac{\partial u}{\partial t} + \beta \frac{\partial}{\partial x}\left(u \frac{\partial \ln v}{\partial x}\right) -\frac{\mu}{2}\frac{\partial^2 u}{\partial x^2}&= 0,\\
    \tau_{v} \frac{\partial v}{\partial t} - \frac{D}{2} \frac{\partial^2 v}{\partial x^2} +\tau_{v} k(u,v)u&=0.
\end{align}
\end{subequations}


\section{Traveling bands}\label{unlim}
This section is dedicated to showing that chemotactic models in the presence of unlimited and limited substrate exhibits traveling band. First, we will define the traveling band. 

\begin{definition}\label{travel_band} (Traveling Band)
The system of Eqs.~\eqref{b_sys} and \eqref{s_sys} exhibits a traveling band  form if there exist solutions $u(x,t)$ and $v(x,t)$ of the following form
\begin{equation}\label{chng var}
    u(x,t)=U(x-ct) \text{   and    } v(x,t)=V(x-ct) \text{   for all    } x \in \mathbb{R}  \text{   and    } t \geq0
\end{equation}
where $c>0$ is the constant band speed, and $U$, $V$ are functions from $\mathbb{R}$ to $(0, \infty)$ such that $\lim_{\zeta \to \pm \infty} U(\zeta)$ and $\lim_{\zeta \to \pm \infty} V(\zeta)$ exist and belong to $[0,\infty)$.
\end{definition}

We will also assume $D=0$ in the Eq.~\eqref{s_sys} since its effect is trivial in the chemotactic model. And, for the sake of simplicity, we will use $\tau=\tau_u$ and $\tau_v =1$. 


\subsection{The case of unlimited substrates
}\label{unlimited_alpha=0}

In the presence of an abundance of the substrate, the rate of consumption of the food, $k(v)$, does not depend on the concentration of food. Therefore $k(v) = k = constant$.
Then the chemotactic model for unlimited availability of substrate is
\begin{subequations}\label{mod1}
 \begin{align} 
     \tau \frac{\partial u}{\partial t} +\beta \frac{\partial}{\partial x}\left(u \frac{\partial \ln v}{\partial x}\right)- \frac{\mu}{2}\frac{\partial^2 u}{\partial x^2} &= 0,\label{red b_sys_eq}\\
     \frac{\partial v}{\partial t} +k u &= 0,\label{red s_sys_eq}
 \end{align}
\end{subequations}
for all $\zeta$ in $\mathbb{R}$.

\begin{theorem}\label{unlim_food}
If $d = \frac{2 \beta}{\mu}$ and $d \geq 1$, then the system \eqref{mod1} exhibits traveling band form. More specifically, for any $\tau, \beta, \mu, k, c, V_{\infty}, C_{0} >0$ and $d\geq1$, there exist solutions of the form \eqref{chng var} where
\begin{subequations}\label{mod1_soln}
\begin{align}
    U(\zeta) &= C_{0} V^{d}(\zeta) e^{-\frac{2\tau c \zeta}{\mu}},\label{soln b_sys_old}\\
  V(\zeta) &= \begin{cases} 
      \big[\frac{1}{2}C_{0} k c^{-2}\tau^{-1} \mu (d-1) e^{-\frac{2\tau c \zeta}{\mu}}+V_{\infty}^{-d+1}\big]^{-\frac{1}{d-1}} & \text{  for  } d>1,\\
      V_{\infty} e^{-\frac{1}{2}C_{0} kc^{-2}\tau^{-1}\mu e^{-\frac{2 \tau c}{\mu}\zeta}} & \text{  for  } d=1.
   \end{cases}\label{soln s_sys}
\end{align}
\end{subequations}
Moreover, $(U(\zeta), V(\zeta))$ is the solution of Eqs.~\eqref{ch red b_sys} and \eqref{ch red s_sys} that satisfies
\begin{align}
    &U(\zeta) \xrightarrow{} 0 \text{,    } U^{'}(\zeta) \xrightarrow{} 0 \text{,     } V(\zeta) \xrightarrow{} V_\infty \text{      as   } \zeta \xrightarrow{} \infty, \label{bd_cond1}\\
    &U(0) V(0)^{-d} = C_{0}.\nonumber
\end{align}
\end{theorem}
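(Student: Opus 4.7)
The plan is to reduce \eqref{mod1} to a pair of ODEs in the traveling variable $\zeta = x - ct$ by substituting $u(x,t)=U(\zeta)$, $v(x,t)=V(\zeta)$, and then to solve the resulting system explicitly. With $u_t=-cU'$ and $v_t=-cV'$, equation \eqref{red s_sys_eq} collapses to the first-order relation
\begin{equation*}
U = \frac{c}{k}\,V',
\end{equation*}
while \eqref{red b_sys_eq} becomes
\begin{equation*}
-\tau c\,U' + \beta\!\left(U\,\frac{V'}{V}\right)' - \frac{\mu}{2}\,U'' = 0.
\end{equation*}
I would integrate this second equation once in $\zeta$ and use the asymptotic conditions in \eqref{bd_cond1}, which force $U,U'\to 0$ and (via $U=cV'/k$) also $V'\to 0$ as $\zeta\to\infty$, to conclude that the constant of integration vanishes.

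The next step is to divide the resulting first-order equation by $U>0$ and rearrange to
\begin{equation*}
\frac{U'}{U} = -\frac{2\tau c}{\mu} + d\,\frac{V'}{V}, \qquad d = \frac{2\beta}{\mu},
\end{equation*}
then integrate once more, fixing the multiplicative constant by the normalization $U(0)V(0)^{-d}=C_0$. This immediately produces \eqref{soln b_sys_old}: $U(\zeta)=C_0\,V(\zeta)^d\,e^{-2\tau c\zeta/\mu}$. Substituting this back into $U = cV'/k$ gives a single separable ODE
\begin{equation*}
V^{-d}V' = \frac{C_0 k}{c}\,e^{-2\tau c\zeta/\mu},
\end{equation*}
which I would integrate in the two regimes separately. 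For $d>1$, the left side integrates to $V^{1-d}/(1-d)$, and the boundary value $V\to V_\infty$ at $+\infty$ fixes the constant to $V_\infty^{1-d}/(1-d)$, producing the first branch of \eqref{soln s_sys}; for $d=1$, the left side integrates to $\ln V$, and the same boundary value produces the second branch.

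The main subtlety, and the reason for the hypothesis $d\geq 1$, is verifying that the resulting formulas define strictly positive functions on all of $\mathbb{R}$. For $d>1$, the bracket in \eqref{soln s_sys} is a sum of two positive terms, so $V(\zeta)>0$ globally, and then $U(\zeta)>0$ follows from \eqref{soln b_sys_old}. For $d=1$, $V$ is the exponential of a real-valued expression, hence positive. If instead $d<1$, one checks that after multiplying the integrated equation by $(1-d)>0$, the quantity $V^{1-d}$ picks up a contribution proportional to $-(1-d)e^{-2\tau c\zeta/\mu}$ that diverges to $-\infty$ as $\zeta\to -\infty$, contradicting $V^{1-d}>0$; so no global positive solution exists and the hypothesis is sharp. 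Once positivity is secured, the asymptotic conditions \eqref{bd_cond1} follow by direct inspection of the explicit formulas, together with $V'=kU/c$ and $U' = C_0\bigl(dV^{d-1}V' - \tfrac{2\tau c}{\mu}V^d\bigr)e^{-2\tau c\zeta/\mu}$, which jointly give $U,U'\to 0$ as $\zeta\to\infty$ since $V\to V_\infty$ and $V'\to 0$ there.
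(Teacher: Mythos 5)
Your proposal is correct and follows essentially the same route as the paper: reduce to the traveling-wave ODEs, integrate the $U$-equation once, use the conditions \eqref{bd_cond1} to kill the constant of integration, solve for $U$ in terms of $V$, and then separate variables in $V'=\tfrac{k}{c}U$ for the two cases $d>1$ and $d=1$. Your added remarks on positivity and on why $d<1$ fails are a sensible supplement, but the core argument matches the paper's proof.
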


\begin{proof}
With $u(x,t)$ and $v(x,t)$ as in Eq.~\eqref{chng var} and Eq.~\eqref{chng var}, the Eqs.~\eqref{red b_sys_eq} and \eqref{red s_sys_eq} are reduced to
\begin{align}
     \tau c U^{'} - \beta \big(U V^{-1} V^{'} \big)^{'}+ \frac{\mu}{2} U^{''} \label{ch red b_sys}&=0,\\
     c V^{'}  - k U\label{ch red s_sys}&=0.
\end{align}
We will look for solutions $U$ and $V$ of Eqs.~\eqref{ch red b_sys} and \eqref{ch red s_sys} that satisfy \eqref{chng var}.\\

Dividing Eq.~\eqref{ch red b_sys} by $\frac{\mu}{2}$ and integrating once we obtain
\begin{equation}\label{first_int_b}
    \frac{2\tau c}{\mu} U - d U V^{-1} V^{'} + U^{'} = constant.
\end{equation}
Then taking the limit $\zeta \to \infty$ in Eq.~\eqref{first_int_b} and applying the conditions \eqref{bd_cond1}, we get constant to be 0. Then Eq.~\eqref{first_int_b} gives
\begin{equation*}\label{(ln u)'-equation}
   U^{'} =  \big(d V^{-1} V^{'} - \frac{2 \tau c}{\mu}\big) U.
\end{equation*}
Therefore, the solution for $U$ is
\begin{equation}\label{mod1_u}
    U(\zeta)= C_{0} V^{d}(\zeta) e^{-\frac{2\tau c \zeta}{\mu}} =  U(0) V(0)^{d} V^{d}(\zeta) e^{-\frac{2\tau c \zeta}{\mu}}.
\end{equation}

\textit{\textbf{Case $d>1$.}} From Eq.~\eqref{ch red s_sys}, we get $V^{'} = \frac{k}{c} U$.

Substituting the expression of $U$ given by Eq.~\eqref{mod1_u} into Eq.~\eqref{ch red s_sys} and integrating with conditions \eqref{bd_cond1}, we get
\begin{equation*}
    V(\zeta)=\big[\frac{1}{2}C_{0} k c^{-2}\tau^{-1} \mu (d-1) e^{-\frac{2\tau c \zeta}{\mu}}+V_{\infty}^{-d+1}\big]^{-\frac{1}{d-1}}.
\end{equation*}

\textit{\textbf{Case $d=1$.}}
The solution of $U$ is
\begin{align*}
    U(\zeta) = C_{0} V(\zeta) e^{-\frac{2\tau c \zeta}{\mu}}.
\end{align*}
Then Eq.~\eqref{ch red s_sys} gives 
\begin{align}
    V^{'} = \frac{k}{c} C_{0}V e^{-\frac{2\tau c \zeta}{\mu}}.\label{d=1}
\end{align}
Integrating the Eq.~\eqref{d=1} and applying the condition \eqref{bd_cond1} we get
\begin{align*}
    V(\zeta) = V_{\infty} e^{-\frac{1}{2}C_{0} kc^{-2}\tau^{-1}\mu e^{-\frac{2 \tau c\zeta}{\mu}}}.
\end{align*}
\end{proof}

One can verify that the solutions \eqref{soln b_sys_old} and \eqref{soln s_sys} satisfy the condition \eqref{bd_cond1} and 
\begin{align*}
    \lim_{\zeta \xrightarrow{} -\infty}U(\zeta) = 0 &\text{     and     } \lim_{\zeta \xrightarrow{} -\infty}V(\zeta) = 0.
\end{align*}

Also, using Eq.~\eqref{ch red s_sys}, we can compute
\begin{align*}
    c = \frac{k}{V_{\infty}} \int_{-\infty}^{\infty} U(\zeta) \, d\zeta.
\end{align*}

\begin{remark}
Here $V_{\infty}>0$ is the certain threshold concentration of substrate that initiates the consumption of substrate by bacteria.
\end{remark}
The closed form of the solution allows basic sensitivity analyses of the model with respect to its parameters as follows. 
\begin{theorem}
Denote $\Vec{W} = (d, C_0, k, c, \tau, \mu,  V_{\infty})\in \mathbb{D} = (1, \infty) \times (0, \infty)^6$.Then for any compact set $\Omega \subset \mathbb{D}$, there exists $K>0$ such that, for any $\Vec{W_1}, \Vec{W_2} \in \Omega$, 
\begin{subequations}\label{lipschitz}
\begin{align}
    |U(\zeta, \Vec{W_1})- U(\zeta, \Vec{W_2})|&\leq K |\Vec{W_1}-\Vec{W_2}|,\label{lip_U}\\
     |V(\zeta, \Vec{W_1})- V(\zeta, \Vec{W_2})|&\leq K |\Vec{W_1}-\Vec{W_2}|\label{lip_V}
\end{align}
\end{subequations}
for all $\zeta \in \mathbb{R}$.
\end{theorem}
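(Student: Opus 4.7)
My plan is to derive~\eqref{lipschitz} from the mean-value inequality in $\vec W$, after showing that the parameter gradients $\nabla_{\vec W} U$ and $\nabla_{\vec W} V$ are uniformly bounded on $\mathbb R \times \Omega$. The explicit formulas in~\eqref{mod1_soln} are $C^\infty$ in $\vec W$ on $\mathbb D = (1,\infty) \times (0,\infty)^6$ for each fixed $\zeta \in \mathbb R$. Because $\mathbb D$ is open and convex and $\Omega \subset \mathbb D$ is compact, the closed convex hull $\widetilde\Omega \subset \mathbb D$ is also compact. For any $\vec W_1, \vec W_2 \in \Omega$ the segment $[\vec W_1, \vec W_2]$ lies in $\widetilde\Omega$, and the mean-value inequality gives
\begin{equation*}
|U(\zeta,\vec W_1) - U(\zeta, \vec W_2)| \leq \Big(\sup_{\vec W \in \widetilde\Omega} |\nabla_{\vec W} U(\zeta, \vec W)|\Big)\,|\vec W_1 - \vec W_2|,
\end{equation*}
with an analogous inequality for $V$. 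It therefore suffices to exhibit a finite $K = K(\Omega)$ with $|\nabla_{\vec W} U|, |\nabla_{\vec W} V| \le K$ on $\mathbb R \times \widetilde\Omega$.

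Define $B_1(\vec W) := \tfrac{1}{2} C_0 k c^{-2} \tau^{-1} \mu (d-1)$ and $\psi(\zeta, \vec W) := B_1 e^{-2\tau c\zeta/\mu} + V_\infty^{1-d}$, so that $V = \psi^{-1/(d-1)}$ and $U = C_0 e^{-2\tau c\zeta/\mu} \psi^{-d/(d-1)}$. From $\psi \ge V_\infty^{1-d}$ and $\psi \ge B_1 e^{-2\tau c\zeta/\mu}$ one gets, uniformly in $\zeta$,
\begin{equation*}
0 < V \le V_\infty, \qquad 0 < U \le \frac{C_0 V_\infty}{B_1} = \frac{2 c^2 \tau V_\infty}{k \mu (d-1)},
\end{equation*}
and both bounds are continuous in $\vec W$ on $\mathbb D$, hence uniformly bounded on $\widetilde\Omega$. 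Introducing the change of variable $y := B_1 e^{-2\tau c\zeta/\mu} \in (0, \infty)$ (so that $\zeta = -\tfrac{\mu}{2\tau c}\ln(y/B_1)$ and $\psi = y + V_\infty^{1-d}$), direct differentiation of~\eqref{mod1_soln} shows that every component of $\nabla_{\vec W} U$ and $\nabla_{\vec W} V$ is a finite sum of terms of the schematic form
\begin{equation*}
A(\vec W) \cdot (y + V_\infty^{1-d})^{-\alpha}\, y^{\beta}\, q(y, \vec W),
\end{equation*}
with $A$ continuous on $\mathbb D$, non-negative exponents satisfying $\alpha - \beta \ge 1/(d-1)$, and $q$ being either $1$, $\ln(y/B_1)$ (from a $\zeta$-factor produced by differentiation in $c,\tau,\mu$), or $\ln(y + V_\infty^{1-d})$ (from $\partial_d$ applied to the exponent $-1/(d-1)$). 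Crucially, a $\zeta$-factor always appears together with $\beta \ge 1$, because differentiating the exponent $-2\tau c\zeta/\mu$ always brings out a companion factor of $y$.

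Each individual term $(y + V_\infty^{1-d})^{-\alpha} y^\beta q(y)$ extends continuously to $[0, \infty]$: it is finite at $y = 0$ (either $\beta \ge 1$ kills $q = \ln(y/B_1)$, or $q$ is already bounded there), and decays to $0$ as $y \to \infty$ because the power $y^{\beta - \alpha} \le y^{-1/(d-1)}$ dominates any logarithm. Hence the supremum over $y \in (0, \infty)$ is a continuous function of $\vec W$ on $\mathbb D$, which is uniformly bounded on the compact set $\widetilde\Omega$. Summing the finitely many contributions in $\nabla_{\vec W} U$ and $\nabla_{\vec W} V$ produces the constant $K$ establishing~\eqref{lip_U} and~\eqref{lip_V}. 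The main obstacle throughout is that differentiating in $c, \tau, \mu$ introduces an unbounded factor of $\zeta$, and differentiating in $d$ introduces $\ln \psi$. Both are merely logarithmic in $y$ and are absorbed by the polynomial decay $y^{\beta - \alpha}$; the decay rate is at worst $y^{-1/(d-1)}$, so the argument requires $d$ to stay bounded away from $1$. Together with $c, \tau, \mu, C_0, k, V_\infty$ bounded away from $0$ and $\infty$, this is precisely what compactness of $\Omega \subset \mathbb D$ supplies.
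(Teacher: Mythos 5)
Your proposal is correct, and its overall strategy coincides with the paper's: both reduce \eqref{lip_U}--\eqref{lip_V} to a uniform bound on the parameter gradients $D_{\Vec{W}}U$ and $D_{\Vec{W}}V$ over $\mathbb{R}\times\Omega$ and then invoke the mean value inequality (the paper enlarges $\Omega$ to a product of intervals, you to the closed convex hull; both serve to make the segment argument legitimate). Where you differ is in how the gradient bound is obtained. The paper writes out each partial derivative explicitly, as in \eqref{der_V} and \eqref{der}, and then bounds a representative one, $\partial V/\partial d$, by splitting into the regimes $\zeta\ge 0$ and $\zeta<0$ and producing ad hoc constants $k_1,\dots,k_6$, the key point being that the exponentially decaying prefactor absorbs the linear factor $|\zeta|$ coming from differentiation in $c,\tau,\mu$ (their $k_6$ in \eqref{k_6}); the remaining six derivatives are declared "similar." You instead substitute $y=B_1e^{-2\tau c\zeta/\mu}$ and observe that every term of every derivative has the common form $A(\Vec{W})\,\psi^{-\alpha}y^{\beta}q$ with $\alpha-\beta\ge 1/(d-1)$ and $q$ at worst logarithmic, after which a single continuity-plus-compactness argument on $[0,\infty]\times\widetilde\Omega$ disposes of all fourteen derivatives at once. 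This buys uniformity and makes transparent exactly where the hypothesis $d>1$ (i.e.\ $d$ bounded away from $1$ on $\Omega$) enters, at the cost of requiring the reader to trust the structural claim about the exponents; your spot-checkable observation that a $\zeta$-factor is always accompanied by $\beta\ge1$ is the same mechanism as the paper's $k_6$ bound, just phrased in the $y$ variable. One small point worth making explicit if you write this up: the continuity of $\sup_{y}$ in $\Vec{W}$ should be justified by noting that each term extends to a jointly continuous function on $[0,\infty]\times\widetilde\Omega$ (the decay rate $y^{-1/(d-1)}\le y^{-1/(d^*-1)}$ is uniform on $\widetilde\Omega$), so the maximum over the compact product is finite.
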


\begin{proof}
Let $A (C_{0}, k, c, \tau, \mu, \zeta) = \frac{1}{2}C_{0} kc^{-2}\tau^{-1}\mu e^{-\frac{2\tau c \zeta}{\mu}}$, then Eq.~\eqref{soln s_sys} can be written as
\begin{align*}
  V(\zeta) = \left(A (d-1) +V_{\infty}^{-(d-1)}\right)^{-\frac{1}{(d-1)}}.
\end{align*}

Without loss of generality, assume 
$$\Omega = [d_{*}, d^{*}] \times [C_{*}, C^{*}] \times [k_{*}, k^{*}] \times [c_{*}, c^{*}] \times [\tau_{*}, \tau^{*}] \times [\mu_{*}, \mu^{*}] \times [V_{*}, V^{*}] \subset \mathbb{D}.$$

We prove \eqref{lip_V} first. For $\Vec{W} \in \Omega$, we compute
\begin{align*}
  D_{\Vec{W}}V(\zeta, \Vec{W}) = \frac{\partial V}{\partial \Vec{W}}=\left(\frac{\partial V}{\partial d},\frac{\partial V}{\partial C_{0}}, \frac{\partial V}{\partial k}, \frac{\partial V}{\partial c}, \frac{\partial V}{\partial \tau}, \frac{\partial V}{\partial \mu},\frac{\partial V}{\partial V_{\infty}}\right),
\end{align*}
where
\begin{subequations}\label{der_V}
\begin{align}
    \frac{\partial V}{\partial d} & = -V \frac{1}{d-1} \left(\ln{V} + \left(A (d-1)+ V_{\infty}^{-(d-1)}\right)^{-1} (A - V_{\infty}^{-(d-1)}\ln{V_{\infty}})\right) ,\label{der_d_V}\\
    \frac{\partial V}{\partial C_{0}} & =  - \left(A (d-1) +V_{\infty}^{-(d-1)}\right)^{-\frac{d}{d-1}} \frac{\partial A}{\partial C_{0}},\label{der_C_V}\\ 
    \frac{\partial V}{\partial k} & =  - \left(A (d-1) +V_{\infty}^{-(d-1)}\right)^{-\frac{d}{d-1}} \frac{\partial A}{\partial k},\label{der_k_V}\\
    \frac{\partial V}{\partial c} & =  - \left(A (d-1) +V_{\infty}^{-(d-1)}\right)^{-\frac{d}{d-1}} \frac{\partial A}{\partial c},\label{der_c_V} \\
    \frac{\partial V}{\partial \tau} & =  - \left(A (d-1) +V_{\infty}^{-(d-1)}\right)^{-\frac{d}{d-1}} \frac{\partial A}{\partial \tau},\label{der_tau_V}\\
    \frac{\partial V}{\partial \mu} & =  - \left(A (d-1) +V_{\infty}^{-(d-1)}\right)^{-\frac{d}{d-1}} \frac{\partial A}{\partial \mu},\label{der_mu_V}\\
    \frac{\partial V}{\partial V_{\infty}} & =  \left(A (d-1) +V_{\infty}^{-(d-1)}\right)^{-\frac{d}{d-1}} V_{\infty}^{-d}\label{der_Vinf_V}.
\end{align}
\end{subequations}

Now, we will take a closer look at Eq.~\eqref{der_d_V} as follows and the rest of the equations can be investigated similarly. For any $\zeta \in \mathbb{R}$ and for any $\Vec{W}\in \Omega$, Eq.~\eqref{der_d_V} gives  

\begin{align*}
    \left|\frac{\partial V}{\partial d}(\zeta, \Vec{W})\right| 
    & \leq V^{*}\frac{1}{d_{*}-1}\bigg(\frac{1}{d_{*}-1}\left|\ln{(A(d-1) + V_{\infty}^{-(d-1)})}\right| + \frac{1}{d_{*}-1} \\
    & \quad + \max\{\left|\ln{V^{*}}\right|,\left|\ln{V_{*}}\right|\}\bigg).
\end{align*} 

For $\zeta >0$, we have $0<A \leq \frac{1}{2}C^{*}k^{*}c^{-2}_{*}\tau^{-1}_{*}\mu^{*}$. Then
\begin{align*}
    A(d-1)+V_{\infty}^{-(d-1)} \geq \frac{1}{(V^{*}+1)^{d_*-1}} = k_{1},\\ 
    A(d-1)+V_{\infty}^{-(d-1)} \leq \frac{1}{2}C^* k^*c_{*}^{-2}\tau_{*}^{-1}\mu^{*} + (\frac{1}{V_*}+1)^{d^*-1}=k_2.
\end{align*}

Then we get
\begin{align*}
     \left|\frac{\partial V}{\partial d} (\zeta, \Vec{W})\right|  \leq V^{*}\frac{1}{d_{*}-1}\big(&\frac{1}{d_{*}-1}\max\{|\ln{k_1}|, |\ln{k_2}|\}\\
    & + \frac{1}{d_{*}-1}+ \max\{\left|\ln{V^{*}}\right|,\left|\ln{V_{*}}\right|\}\big) = K_1.
\end{align*} 

Thus,
\begin{align}\label{k1_d}
    \left|\frac{\partial V}{\partial d} (\zeta, \Vec{W})\right| \leq K_1
\end{align}
for any $\zeta \geq 0$ and $\Vec{W} \in \Omega$.

For $\zeta < 0$, we get
\begin{align*}
    &\frac{\partial V}{\partial d} (\zeta, \Vec{W}) = 
     - \frac{e^{\frac{2\tau c \zeta}{\mu}\frac{1}{d-1}}}{(d-1)\big(\frac{1}{2}C_{0} kc^{-2}\tau^{-1}\mu (d-1) +V_{\infty}^{-(d-1)}e^{\frac{2\tau c \zeta}{\mu}}\big)^{\frac{1}{d-1}}}\\
    &\quad \cdot \left(-\frac{2 \tau c \zeta}{\mu}+\ln{\left(\frac{1}{2}C_{0} kc^{-2}\tau^{-1}\mu (d-1) +V_{\infty}^{-(d-1)}e^{\frac{2\tau c \zeta}{\mu}}\right)} + \frac{1}{d-1}+ \left|\ln{V_{\infty}}\right|\right),\\
    & = I_{1}+I_2,
\end{align*}
where
\begin{align*}
    I_1 &= - \frac{e^{\frac{2\tau c \zeta}{\mu}\frac{1}{d-1}}\ln{\big(\frac{1}{2}C_{0} kc^{-2}\tau^{-1}\mu (d-1)}+V_{\infty}^{-(d-1)}e^{\frac{2\tau c \zeta}{\mu}}\big)}{(d-1)\left(\frac{1}{2}C_{0} kc^{-2}\tau^{-1}\mu (d-1) +V_{\infty}^{-(d-1)}e^{\frac{2\tau c \zeta}{\mu}}\right)^{\frac{1}{d-1}}},\\
    I_2 &= - \frac{e^{\frac{2\tau c \zeta}{\mu}\frac{1}{d-1}}}{(d-1)\left(\frac{1}{2}C_{0} kc^{-2}\tau^{-1}\mu (d-1) +V_{\infty}^{-(d-1)}e^{\frac{2\tau c \zeta}{\mu}}\right)^{\frac{1}{d-1}}} \left(-\frac{2 \tau c \zeta}{\mu} + \frac{1}{d-1}+ \left|\ln{V_{\infty}}\right|\right).
\end{align*}

We estimate $|I_1|$ first. We have 
\begin{align*}
    \frac{1}{2}C_{0} kc^{-2}\tau^{-1}\mu (d-1)+V_{\infty}^{-(d-1)}e^{\frac{2\tau c \zeta}{\mu}} 
    &\geq \frac{1}{2}C_{*} k_*c^{*-2}\tau^{*-1}\mu_* (d_*-1)=k_3,\\
    \frac{1}{2}C_{0} kc^{-2}\tau^{-1}\mu (d-1)+V_{\infty}^{-(d-1)}e^{\frac{2\tau c \zeta}{\mu}} 
    &\leq \frac{1}{2}C^{*} k^{*}c_*^{-2}\tau_*^{-1}\mu^* (d^*-1)+(\frac{1}{V_{*}}+1)^{-(d_*-1)}=k_4,\\
    \big(\frac{1}{2}C_{0} kc^{-2}\tau^{-1}\mu (d-1) +V_{\infty}^{-(d-1)}e^{\frac{2\tau c \zeta}{\mu}}\big)^{-\frac{1}{d-1}} &\leq \big(\frac{1}{\frac{1}{2}C_{*} k_{*}c^{*-2}\tau^{*-1}\mu_{*} (d_{*}-1)}+1\big)^{\frac{1}{d_*-1}}= k_5.
\end{align*}

Hence
\begin{align*}
    \left|I_{1}\right| \leq  \frac{k_5\max\{|\ln{k_3}|,|\ln{k_4}|\}}{d_{*}-1} =K_{2}.
\end{align*}

For estimating $|I_2|$, we have
\begin{align*}
    \left|I_{2}\right| &\leq V^{*}\frac{k_5k_6 \frac{2 \tau^{*} c^{*}}{\mu_{*}}}{d_{*}-1}+ V^{*}\frac{k_5\left( \frac{1}{d_{*}-1}+ 
    \max\{\left|\ln{V^{*}}\right|,\left|\ln{V_{*}}\right|\}\right)}{d_{*}-1} = K_3,
\end{align*}
where,
\begin{align}\label{k_6}
    \sup\{e^{-\frac{2\tau_{*} c_{*} |\zeta|}{\mu^{*}(d^*-1)}} |\zeta|, \zeta<0\} = k_6 < \infty. 
\end{align}

Hence
\begin{align}\label{k2_d}
    \left|\frac{\partial V}{\partial d} (\zeta, \Vec{W})\right| \leq K_2+K_3 = K_4
\end{align}
for any $\Vec{W} \in \Omega$ and $\zeta <0$.
Let $K = \max\{K_1, K_4\}$, then by Eqs.\eqref{k1_d} and \eqref{k2_d}, we get
\begin{align}
    \left|\frac{\partial V}{\partial d} (\zeta, \Vec{W})\right| \leq K
\end{align}
for any $\Vec{W} \in \Omega$ and $\zeta \in \mathbb{R}$.

A similar inspection on Eqs.~\eqref{der_V} reveals that, for any $\Vec{W} \in \Omega$, there exists $K>0$ such that $|D_{\Vec{W}}V| \leq K$ on $\mathbb{R} \times \Omega$. Therefore, by Mean Value Theorem we obtain Eq.~\eqref{lip_V}.

Similarly, we can show that for any $\Vec{W} \in \Omega$ and $\zeta \in \mathbb{R}$,
\begin{align*}
  D_{\Vec{W}}U(\zeta, \Vec{W}) = \frac{\partial U}{\partial \Vec{W}}=\big(\frac{\partial U}{\partial d},\frac{\partial U}{\partial C_{0}}, \frac{\partial U}{\partial k}, \frac{\partial U}{\partial c}, \frac{\partial U}{\partial \tau}, \frac{\partial U}{\partial \mu},\frac{\partial U}{\partial V_{\infty}}\big),
\end{align*}
with
\begin{subequations}\label{der}
\begin{align}
    \frac{\partial U}{\partial d} & = C_{0} d V^{d-1} \frac{\partial V}{\partial d} e^{-\frac{2\tau c \zeta}{\mu}},\label{der_U_d}\\
    \frac{\partial U}{\partial C_{0}} & =  V^{d} \big(1+ C_{0} d V^{-1} \frac{\partial V}{\partial C_{0}}\big) e^{-\frac{2\tau c \zeta}{\mu}},\label{der_U_C}\\ 
    \frac{\partial U}{\partial k} & = C_{0} d V^{d-1} \frac{\partial V}{\partial k} e^{-\frac{2\tau c \zeta}{\mu}},\label{der_U_k}\\
    \frac{\partial U}{\partial c} & = C_{0} V^{d}  \big(d V^{-1} \frac{\partial V}{\partial c} - \frac{2 \tau \zeta}{\mu}\big)e^{-\frac{2\tau c \zeta}{\mu}}, \label{der_U_c}\\
    \frac{\partial U}{\partial \tau} & =  C_{0} V^{d}  \big(d V^{-1} \frac{\partial V}{\partial \tau} - \frac{2 c \zeta}{\mu}\big)e^{-\frac{2\tau c \zeta}{\mu}},\label{der_U_tau}\\
    \frac{\partial U}{\partial \mu} & =  -C_{0} V^{d} \big(d \big[A (d-1) +V_{\infty}^{-(d-1)}\big]^{-1} \frac{\partial A}{\partial \mu} - \frac{2 \tau c \zeta}{\mu^2}\big)e^{-\frac{2\tau c \zeta}{\mu}},\label{der_U_mu}\\
    \frac{\partial U}{\partial V_{\infty}} & =  C_{0} d V^{d-1} \frac{\partial V}{\partial V_{\infty}} e^{-\frac{2\tau c \zeta}{\mu}}.\label{der_U_V}
\end{align}
\end{subequations}
are bounded and thus the Eq.~\eqref{lip_U} is satisfied.
\end{proof}

We also investigate the uniform convergence of the solutions~\eqref{soln b_sys_old} and \eqref{soln s_sys} with respect to the parameter $d$. Here, $d$ is the ratio of the chemotactic coefficient and motility coefficient. 
\begin{theorem}
Suppose $C_0, k, c, \tau, \mu, V_{\infty}$ are given and denote the family of the solutions given by the Eqs.~\eqref{soln b_sys_old} and \eqref{soln s_sys} by $U_{d}(\zeta)$ and $V_{d}(\zeta)$ corresponding to the values of $d \geq 1$. Then 
\begin{align*}
    U_{d}(\zeta) \to U_{1}(\zeta) \text{    and    } V_{d}(\zeta) \to V_{1}(\zeta) \text{    uniformly in $\zeta \in \mathbb{R}$ as    } d \searrow 1.
\end{align*}
\end{theorem}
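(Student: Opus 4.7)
The plan is to leverage the closed-form expressions and reduce everything to a single auxiliary variable. Setting $h=d-1\geq 0$ and $A(\zeta):=\tfrac{1}{2}C_0 k c^{-2}\tau^{-1}\mu\,e^{-2\tau c\zeta/\mu}$, which is a continuous decreasing bijection $\mathbb{R}\to(0,\infty)$, one rewrites $V_d(\zeta)=V_\infty[1+hA(\zeta)V_\infty^h]^{-1/h}$ for $h>0$ and $V_1(\zeta)=V_\infty e^{-A(\zeta)}$. Using $V'=(k/c)U$ from Eq.~\eqref{ch red s_sys} one obtains $U_d=\tfrac{2\tau c^2}{k\mu}\cdot\tfrac{V_\infty^h A V_d}{1+hAV_\infty^h}$ and $U_1=\tfrac{2\tau c^2}{k\mu}AV_1$. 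Since $V_d$ and $U_d$ depend on $\zeta$ only through $A(\zeta)$, the task becomes uniform convergence on $A\in(0,\infty)$ as $h\searrow 0$.

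For $V$, I would split $(0,\infty)$ into a compact middle piece $[A_*,A^*]$ and the two tails. On the middle piece, the map $(A,h)\mapsto V_\infty[1+hAV_\infty^h]^{-1/h}$ extends continuously to $h=0$ with value $V_\infty e^{-A}$, so joint continuity on the compact set $[A_*,A^*]\times[0,\delta]$ gives uniform convergence. On the right tail $A\leq A_*$, the inequality $(1+hAV_\infty^h)^{-1/h}\geq e^{-AV_\infty^h}$ (from $1+x\leq e^x$) yields $0\leq V_\infty-V_d\leq V_\infty A V_\infty^h$ and $0\leq V_\infty-V_1\leq V_\infty A$, both uniformly small as $A\to 0^+$. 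On the left tail $A\geq A^*$, Bernoulli's inequality $(1+x)^{1/h}\geq 1+x/h$ (valid for $0<h\leq 1$) gives $V_d\leq V_\infty/(1+AV_\infty^h)\leq V_\infty^{1-h}/A$, while $V_1\leq V_\infty e^{-A}$; both vanish uniformly as $A\to\infty$.

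For $U$ the same three-region scheme works, but the left tail is subtle. Define $\phi(A,h):=AV_d$. A naive bound fails here, because a direct calculation shows $\phi(\cdot,h)$ is unimodal with maximum $\phi(A^*(h),h)=V_\infty^{1-h}(1-h)^{1/h-1}\to V_\infty/e$ as $h\searrow 0$, attained at $A^*(h)=1/[V_\infty^h(1-h)]$, a quantity which stays bounded for $h\in[0,\delta]$ with $\delta<1$. Hence choosing $A_0>\sup_{h\in[0,\delta]}A^*(h)$, monotonicity of $\phi(\cdot,h)$ on $[A_0,\infty)$ gives $\phi(A,h)\leq\phi(A_0,h)$ there, while $\phi(A_0,h)\to A_0 V_\infty e^{-A_0}$ by pointwise continuity in $h$, which can be made arbitrarily small by taking $A_0$ large. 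This controls $U_d$ on the left tail uniformly in $h$, and $U_1\leq\tfrac{2\tau c^2}{k\mu}A_0 V_\infty e^{-A_0}$ there by the same choice. The main obstacle is precisely this non-uniformity of $\phi$: no direct $h$-uniform smallness holds as $A\to\infty$, and one has to exploit the fact that the maximizer $A^*(h)$ stays bounded as $h\searrow 0$ and then invoke monotonicity past it.
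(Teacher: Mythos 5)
Your proof is correct, and it follows the same broad strategy as the paper's --- isolate the $\zeta\to-\infty$ tail, where both functions are uniformly small, from a region where the relevant auxiliary quantity is bounded --- but the execution differs in useful ways. The paper substitutes $y=BV_\infty^\delta e^{-2\tau c\zeta/\mu}$, splits $\mathbb{R}$ into $(-\infty,-M]$ and $[-M,\infty)$, and on the latter expands $\ln(1+\delta y)=\delta y+O((\delta y)^2)$ using the boundedness of $y$; you instead reparametrize by $A(\zeta)$, use a three-region split (two tails plus a compact middle handled by joint continuity up to $h=0$), and replace the Taylor expansion with the elementary bounds $1+x\le e^x$ and Bernoulli's inequality, which avoids the $O(\cdot)$ bookkeeping at the cost of one extra region. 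The substantive difference is your treatment of $U$: the paper's Steps 1--3 establish uniform convergence only for $V_{1+\delta}$ and verify the limit of $U_{1+\delta}$ merely pointwise, even though uniform convergence of $V_d$ does not by itself transfer to $U_d=\tfrac{2\tau c^2}{k\mu}A\,V_d^{\,d}$ because of the unbounded factor $A$ on the left tail. Your observation that $A\mapsto AV_d$ is unimodal with maximizer $A^*(h)=1/[V_\infty^h(1-h)]$ bounded uniformly for $h\in[0,\delta]$, followed by monotonicity past a fixed $A_0$, is exactly the ingredient needed to close that gap, so your argument is in this respect more complete than the one printed in the paper.
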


\begin{proof}
Let $\delta = d-1$, $B = \frac{1}{2}C_{0} kc^{-2}\tau^{-1}\mu$ and $y = B V_{\infty}^{\delta} e^{-\frac{2\tau c \zeta}{\mu}}$, then Eq.~\eqref{soln s_sys}, for $d>1$, is reduced to
\begin{align}\label{v_red_ucont}
    V_{1+\delta}(\zeta) = V_{\infty}\big((1+\delta y)^{\frac{1}{\delta y}}\big)^{-B V_{\infty}^{\delta} e^{-\frac{2\tau c \zeta}{\mu}}}
\end{align}
Then 
\begin{equation*}
    \lim_{\delta \to 0} V_{1+\delta}(\zeta) = V_{\infty} e^{-B e^{-\frac{2 \tau c}{\mu}\zeta} }= V_{1}(\zeta).
\end{equation*}
And
\begin{equation*}
    \lim_{\delta \to 0} U_{1+\delta}(\zeta) = C_{0} V_{1} e^{-\frac{2\tau c \zeta}{\mu}}=U_{1}(\zeta).
\end{equation*}

Next, we need to prove it converges uniformly on $\mathbb{R}$. Given $\epsilon>0$, let denote 
\begin{align}\label{V_exp}
    V_{1+\delta}(\zeta) = V_{\infty}\big((1+\delta y)^{\frac{1}{\delta y}}\big)^{-y} = V_{\infty} \big(e^{{\frac{1}{\delta y}} \ln(1+ \delta y)}\big)^{-y} = V_{\infty} e^{-{\frac{1}{\delta}} \ln(1+ \delta y)}.
\end{align}

\medskip
\noindent
\textbf{Step 1.} There exists $M>0$ so that for any $\zeta \in (-\infty, -M]$, we have
\begin{align}\label{step1_V1}
    |V_{1}(\zeta)| \leq |V_{\infty} e^{-B e^{\frac{2 \tau c}{\mu}M}}| < \frac{\epsilon}{3}.
\end{align}
And Eq.~\eqref{v_red_ucont} gives
\begin{align}\label{step1_Vd_norm}
    ||V_{1+ \delta}(\zeta)||_{\mathcal{C}((-\infty, -M])}&= V_{\infty}(1+ \delta B V_{\infty}^{\delta} e^{\frac{2 \tau c}{\mu}M})^{-\frac{1}{\delta}}.
\end{align}
Because 
\begin{equation*}
    \lim_{\delta \to 0} V_{\infty}(1+ \delta B V_{\infty}^{\delta} e^{\frac{2 \tau c}{\mu}M})^{-\frac{1}{\delta}} = V_{\infty}e^{-B e^{\frac{2 \tau c}{\mu}M}},
\end{equation*}
then there exists $\delta_{1}>0$ such that for any $0< \delta < \delta_1$, we have
\begin{align}\label{step1_Vd}
    \left|V_{\infty}(1+ \delta B V_{\infty}^{\delta} e^{\frac{2 \tau c}{\mu}M})^{-\frac{1}{\delta}}-V_{\infty} e^{-B e^{\frac{2 \tau c}{\mu}M}}\right| & <\frac{\epsilon}{3}.
\end{align}
Thus
\begin{align*}
    ||V_{1+ \delta}(\zeta)||_{\mathcal{C}((-\infty, -M])} & < \frac{\epsilon}{3} + V_{\infty} e^{-B e^{\frac{2 \tau c}{\mu}M}} \leq \frac{\epsilon}{3} + \frac{\epsilon}{3} = \frac{2\epsilon}{3}.
\end{align*}
Therefore, by Eqs.~\eqref{step1_V1}, \eqref{step1_Vd_norm} and \eqref{step1_Vd} we get, for $\zeta \in (-\infty, -M]$,
\begin{align}\label{step1}
    |V_{1+\delta}(\zeta) - V_{1}(\zeta)| \leq |V_{1+\delta}(\zeta)| + |V_{1}(\zeta)| \leq \frac{2\epsilon}{3} + \frac{\epsilon}{3} = \epsilon.
\end{align}

\medskip
\noindent
\textbf{Step 2.} Consider $\zeta \in [-M, \infty)$ and $\delta <1$. Then
\begin{align}
    0< y \leq K:= B (V_{\infty} +1) e^{\frac{2 \tau c M}{\mu}}.
\end{align}
In the last formula of Eq.~\eqref{V_exp}, expanding $\ln(1+\delta y)$ in $\delta y \in [0,K]$ gives
\begin{align}\label{expand}
    V_{1+\delta}(\zeta) 
    & = V_{\infty}e^{-\frac{1}{\delta} (\delta y + \mathbb{O}((\delta y)^2))}= V_{\infty}e^{-y + \mathbb{O}(\delta)}.
\end{align}

For any $\zeta \in [-M, \infty)$,
\begin{align*}
    |V_{1+\delta}(\zeta) - V_{\infty} e^{-B e^{-\frac{2 \tau c}{\mu}\zeta}}| & \leq V_{\infty} \left|e^{(1-V_{\infty}^{\delta})Be^{-\frac{2 \tau c}{\mu}\zeta}+ \mathbb{O}(\delta)} - 1\right|.
\end{align*}
Because $e^{-\frac{2 \tau c}{\mu}\zeta} \leq e^{\frac{2 \tau c}{\mu}M}$ for all $\zeta \in [-M, \infty)$, then $(1-V_{\infty}^{\delta})Be^{-\frac{2 \tau c}{\mu}\zeta}+ \mathbb{O}(\delta)$ converges to $0$ as $\delta \to 0$ uniformly in $\zeta \in [-M, \infty)$. Therefore, $e^{(1-V_{\infty}^{\delta})Be^{-\frac{2 \tau c}{\mu}\zeta}+ \mathbb{O}(\delta)}$ converges to 1 as $\delta \to 0$ uniformly in $\zeta$. Thus, there exists $\delta_2>0$ such that for any $0< \delta < \delta_2$ and $\zeta \in [-M, \infty)$,
\begin{align*}
    V_{\infty} \left|e^{(1-V_{\infty}^{\delta})Be^{-\frac{2 \tau c}{\mu}\zeta}+ \mathbb{O}(\delta)} - 1\right| < \epsilon.
\end{align*}

Consequently, for $0 < \delta < \delta_2$, we have 
\begin{align}\label{step2}
    |V_{1+\delta}(\zeta) - V_{\infty} e^{-B e^{-\frac{2 \tau c}{\mu}\zeta}}| < \epsilon
\end{align}
for any $\zeta \in [-M, \infty)$.

\medskip
\noindent
\textbf{Step 3.} Now, choose $\delta_{0} = \min{(\delta_1, \delta_2)}>0$. Hence, for any $0<\delta < \delta_{0}$ and any $\zeta \in \mathbb{R}$ it follows from \eqref{step1} and \eqref{step2} that $$|V_{1+\delta}(\zeta)-V_{1}(\zeta)|<\epsilon.$$
Therefore, $V_{1+\delta}(\zeta)$ converges uniformly to $V_{1}(\zeta)$ in $\zeta \in \mathbb{R}$ as $\delta \to 0$.
\end{proof}

\begin{corollary}[Shapes of $U$ and $V$]\label{shape_mod1}
Let $U$ and $V$ be the solutions of the model \eqref{mod1}. 
\begin{enumerate}
    \item[\rm(i)]The function $U(\zeta)$ attains its maximum
    \[  U_{max} = \begin{cases} 
      2 c^{2} \tau k^{-1} \mu^{-1} V_{\infty} d^{-(\frac{d}{d-1})}  & \text{  at  } \zeta_0=\frac{\mu}{2\tau c}\ln(\frac{\frac{1}{2}C_{0} k c^{-2} \tau^{-1}\mu}{V_{\infty}^{-d+1}}) \text{  for  } d>1,\\
      2 c^{2} \tau k^{-1} \mu^{-1} V_{\infty}e^{-1} & \text{  at  } \zeta_0=\frac{\mu}{2\tau c}\ln(\frac{1}{2}C_{0} k c^{-2} \tau^{-1}\mu) \text{  for  } d=1.
   \end{cases}\]
    In fact, $U(\zeta)$ is an increasing function on $(-\infty , \zeta_{0})$ and decreasing on $(\zeta_0 , \infty)$.\\
    \item [\rm(ii)] The function $V(\zeta)$ is strictly increasing from $zero$ to constant $V_\infty$ as $\zeta$ varies from $-\infty$ to $\infty$.
\end{enumerate}
\end{corollary}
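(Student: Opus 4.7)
My plan is to reduce everything to the closed-form expressions \eqref{soln b_sys_old}--\eqref{soln s_sys} together with the first-order relation $V'=(k/c)U$ coming from \eqref{ch red s_sys}, after which both statements follow from elementary calculus applied in the right coordinates.

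Part (ii) is almost immediate. Since $U(\zeta)=C_{0}V^{d}(\zeta)e^{-2\tau c\zeta/\mu}>0$ pointwise, the identity $V'(\zeta)=(k/c)U(\zeta)$ forces $V$ to be strictly increasing on $\mathbb{R}$. The limits are read off the closed form. Writing $A(\zeta)=\tfrac{1}{2}C_{0}kc^{-2}\tau^{-1}\mu\,e^{-2\tau c\zeta/\mu}$, the bracket in \eqref{soln s_sys} for $d>1$ is $A(\zeta)(d-1)+V_{\infty}^{-(d-1)}$, which tends to $V_{\infty}^{-(d-1)}$ as $\zeta\to+\infty$ and to $+\infty$ as $\zeta\to-\infty$, giving $V\to V_{\infty}$ and $V\to 0$ respectively. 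The $d=1$ case is read off the double-exponential in exactly the same manner.

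For part (i), I take the logarithmic derivative of $U$:
\begin{equation*}
\frac{U'(\zeta)}{U(\zeta)}=d\,\frac{V'(\zeta)}{V(\zeta)}-\frac{2\tau c}{\mu}=\frac{dkC_{0}}{c}V^{d-1}(\zeta)e^{-2\tau c\zeta/\mu}-\frac{2\tau c}{\mu}.
\end{equation*}
For $d>1$, substituting $V^{d-1}=\bigl[A(\zeta)(d-1)+V_{\infty}^{-(d-1)}\bigr]^{-1}$ and setting $u=e^{-2\tau c\zeta/\mu}$ turns the right-hand side into an affine-rational function of $u$ that is strictly increasing in $u$; since $u$ is strictly decreasing in $\zeta$, the quantity $U'/U$ is strictly decreasing in $\zeta$ and therefore has a unique zero $\zeta_{0}$, with $U'>0$ to its left and $U'<0$ to its right. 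Setting $U'(\zeta_{0})=0$ and simplifying collapses to the clean identity $A(\zeta_{0})=V_{\infty}^{-(d-1)}$, which yields the stated $\zeta_{0}$ together with $V(\zeta_{0})=V_{\infty}d^{-1/(d-1)}$; inserting these into $U=C_{0}V^{d}e^{-2\tau c\zeta/\mu}$ produces $U_{\max}=2c^{2}\tau k^{-1}\mu^{-1}V_{\infty}d^{-d/(d-1)}$. The $d=1$ case is the obvious specialization: $U'/U=(C_{0}k/c)e^{-2\tau c\zeta/\mu}-2\tau c/\mu$ is visibly strictly decreasing, and the same bookkeeping yields the asserted $\zeta_{0}$ and $U_{\max}=2c^{2}\tau k^{-1}\mu^{-1}V_{\infty}e^{-1}$.

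No step is genuinely hard; the one point requiring care is verifying that $U'/U$ changes sign exactly once, which I handle by the monotonicity of the rational function of $u$ described above. Once uniqueness of $\zeta_{0}$ is in hand, the signs of $U'$ on either side give the unimodal shape of $U$, and the explicit evaluations of $\zeta_{0}$, $V(\zeta_{0})$, and $U_{\max}$ are then straightforward algebra arranged to match the form displayed in the statement.
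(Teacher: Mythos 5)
Your proposal is correct: the logarithmic-derivative computation, the reduction of $U'/U$ to a strictly increasing function of $u=e^{-2\tau c\zeta/\mu}$ composed with the decreasing map $\zeta\mapsto u$, the resulting unique sign change at $\zeta_0$ characterized by $A(\zeta_0)=V_\infty^{-(d-1)}$, and the evaluations $V(\zeta_0)=V_\infty d^{-1/(d-1)}$ and $U_{\max}=2c^2\tau k^{-1}\mu^{-1}V_\infty d^{-d/(d-1)}$ (with the analogous $d=1$ bookkeeping) all check out, as does part (ii) via $V'=(k/c)U>0$ and the limits of the closed forms. Note that the paper states this corollary without any proof, so there is no argument to compare against; your direct verification is the natural one and supplies the missing details, including the monotonicity of $U'/U$ that guarantees $\zeta_0$ is the unique critical point.
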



\subsection{The case of limited substrates}
If the  unavailability of the source of food plays a role in the depletion of the concentration of substrate then $k(v)$ is proportional to $v(x,t)$, i.e. $k(v) = k v$ with $k$ as a constant. Therefore the chemotactic model for the limited substrate is reduced to
\begin{subequations}\label{mod3}
\begin{align}
     \tau \frac{\partial u}{\partial t} +\beta \frac{\partial }{\partial x}\left(u \frac{\partial \ln v}{\partial x}\right) - \frac{\mu}{2}\frac{\partial^2 u}{\partial x^2}&=0,\label{ks1_b_sys}\\
     \frac{\partial v}{\partial t} + k u v\label{ks1_s_sys}&=0.
\end{align}
\end{subequations}

With Eq.~\eqref{chng var}, the system of equations \eqref{ks1_b_sys} and \eqref{ks1_s_sys} reduce to
\begin{align}
  \tau c U^{'} - \beta \left(U(\ln V)^{'}\right)^{'} +\frac{\mu}{2} U^{''} =0, \label{ks_red_b_sys} \\
  c V^{'} -k U V=0.\label{ks_red_s_sys}  
\end{align}
Note that, Eq.~\eqref{ks_red_s_sys} gives
\begin{align*}
    (\ln V)^{'} = \frac{k}{c}U, 
\text{       which implies      }    (\ln V)^{''}=\frac{k}{c}U^{'}.
\end{align*} 
Therefore from Eqs.~\eqref{ks_red_b_sys} and \eqref{ks_red_s_sys}, it follows that
\begin{equation}\label{main-ODE-2-model}
    U^{'} - C_{3} (U^2)^{'}+\frac{\mu}{2 \tau c} U^{''}=0.
\end{equation}
Here
\begin{equation*}\label{alpha}
    C_3 =\frac{2 \beta k}{c \mu}\end{equation*}
with dimension $[L]$.\par
Then integration of Eq.~\eqref{main-ODE-2-model} gives,
\begin{equation}\label{mod_3_U_int1}
    U - C_{3} U^2+\frac{\mu}{2 \tau c} U^{'}= constant.
\end{equation}
We are looking for solutions that satisfy condition~\eqref{bd_cond1}. And by the condition \eqref{bd_cond1}, the constant in Eq.~\eqref{mod_3_U_int1} is 0. \par
Then from above, it follows,
\begin{align}\label{1-st-order-ode-2}
  U^{'}&=C_{3}U(U- C_{4})
\end{align}
with
\begin{equation*}\label{K-def}
C_{4}=\frac{\tau c^2}{\beta k}.
\end{equation*}
Note that, $C_{3}>0$ and $C_4>0$.\par
\textbf{Case I:} $U(0) = C_{4}$. Then $U(\zeta) = C_{4}$ and $V(\zeta) = V(0) e^{\frac{kC_{4}}{c}\zeta}$ for all $\zeta \in \mathbb{R}$. These solutions do not satisfy the condition~\eqref{bd_cond1}.\par

\textbf{Case II:} $0<U(0) < C_4$. Then $0<U(\zeta)<C_{4}$ for all $\zeta \in \mathbb{R}$. 
Then partial decomposition of Eq.~\eqref{1-st-order-ode-2} gives, 
\begin{align*}\label{1-st-order-ode-2}
    \frac{U^{'}}{C_4 -U}+\frac{U^{'}}{U} & = -\frac{2 \tau c }{\mu}.
\end{align*}
And integration gives,
\begin{equation}\label{lim_neq_u_ln}
    \ln{\big|\frac{C_{4} -U}{U}\big|}= -\frac{2 \tau c}{\mu}\zeta + Constant.
\end{equation}
Therefore, from Eq.~\eqref{lim_neq_u_ln} we get
\begin{equation}\label{lim_neq_u_temp}
    U= C_{4} \big(1+C_{0}e^{\frac{2 \tau c}{\mu}\zeta}\big)^{-1}
\end{equation}
Hence $U(\zeta) \to C_4$ as $\zeta \to -\infty$ and $U(\zeta) \to 0$ as $\zeta \to \infty$.\par
Substituting Eq.~\eqref{lim_neq_u_temp} into Eq.~\eqref{ks_red_s_sys} and integrating, we get
\begin{align}\label{lim_neq_v}
 V & = C_6 \big(e^{-\frac{2 \tau c}{\mu}\zeta}+C_0)^{-\frac{\mu} {2\beta}},
\end{align}
where $C_6$ is the integrating constant.\par
With condition \eqref{bd_cond1}, it follows
\begin{align*}
 V = V_{\infty} \left(1+C_{7} e^{-\frac{2 \tau c}{\mu}\zeta} \right)^{-\frac{\mu} {2\beta}}
\end{align*}
with $C_{7} = C_{0}^{-1}$. Here, $V(\zeta) \to V_{\infty}$ as $\zeta \to \infty$. Therefore, the solutions satisfy the condition~\eqref{bd_cond1}. \par
\textbf{Case III:} $U(0) > C_4$. Then $U(\zeta)> C_{4}$ for all $\zeta \in (-\infty, \zeta_{max})$ where $\zeta_{max} \leq \infty$. Hence Eq.~\eqref{lim_neq_u_ln} gives \begin{align*}
     U= C_{4} \big(1-C_{0}e^{\frac{2 \tau c}{\mu}\zeta}\big)^{-1}.
\end{align*}
Here $\zeta_{max} = -\frac{\mu}{2 \tau c} \ln{C_5}$, which is finite and $U(\zeta) \to \infty$ as $\zeta \to \zeta_{max}$. Hence, the solutions do not satisfy the condition~\eqref{bd_cond1}.

\begin{theorem}\label{lim_food}
 For any $\tau, \beta, \mu, k, c>0$ the system~\eqref{mod3} has a traveling band of the form \eqref{chng var}. More precisely, $U(\zeta)$ and $V(\zeta)$ can be given by
\begin{subequations}
 \begin{align}
    U(\zeta)&= \frac{\tau c^2}{\beta k} \big(1+C_{0}e^{\frac{2 \tau c}{\mu}\zeta}\big)^{-1},\label{lim_neq_u}\\
    V(\zeta) &= V_{\infty} \left(1+ C_{0}^{-1} e^{-\frac{2 \tau c}{\mu}\zeta} \right)^{-\frac{\mu} {2\beta}}\label{lim_neq_v_cond},
\end{align}
\end{subequations}
where $V_{\infty}>0$ and $C_{0} > 1$. In fact, $U(\zeta)$ and $V(\zeta)$ in Eqs.~\eqref{lim_neq_u} and \eqref{lim_neq_v_cond} are unique solutions of Eqs.~\eqref{ks_red_b_sys} and \eqref{ks_red_s_sys} that satisfy condition \eqref{bd_cond1} and 
\begin{align}\label{mod3_c0}
    U(0) = \frac{\tau c^2 C_0}{\beta k}.
\end{align}
\end{theorem}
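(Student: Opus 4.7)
The plan is to carry out the traveling-wave reduction exactly as set up in the discussion immediately preceding the theorem, then organize the case analysis into an existence-plus-uniqueness proof. First I would substitute the ansatz $u(x,t)=U(x-ct)$, $v(x,t)=V(x-ct)$ into the system \eqref{mod3} to obtain the ODE pair \eqref{ks_red_b_sys}--\eqref{ks_red_s_sys}. From \eqref{ks_red_s_sys} one reads off $(\ln V)'=\frac{k}{c}U$, and differentiating gives $(\ln V)''=\frac{k}{c}U'$; inserting these into \eqref{ks_red_b_sys} eliminates $V$ and produces the single scalar equation \eqref{main-ODE-2-model} for $U$.

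Next I would integrate \eqref{main-ODE-2-model} once. Applying the decay condition \eqref{bd_cond1} at $\zeta\to+\infty$ kills the constant of integration, yielding the first-order autonomous ODE \eqref{1-st-order-ode-2}, namely $U'=C_3 U(U-C_4)$ with $C_3=2\beta k/(c\mu)>0$ and $C_4=\tau c^2/(\beta k)>0$. The two equilibria $U\equiv 0$ and $U\equiv C_4$ partition initial data at $\zeta=0$ into three regimes, which I would handle in the order presented in the excerpt: Case I ($U(0)=C_4$) yields a constant $U$ and an exponentially growing $V$, incompatible with \eqref{bd_cond1}; Case III ($U(0)>C_4$) blows up in finite $\zeta$, again incompatible with \eqref{bd_cond1}; only Case II ($0<U(0)<C_4$) admits a solution that stays bounded and satisfies the required limits. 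Separation of variables and partial fractions on \eqref{1-st-order-ode-2} then produce \eqref{lim_neq_u_temp}, which is exactly \eqref{lim_neq_u} after identifying $C_0$ through the relation \eqref{mod3_c0} evaluated at $\zeta=0$; substituting this $U$ into $(\ln V)'=\frac{k}{c}U$ and integrating with $V(\zeta)\to V_\infty$ gives \eqref{lim_neq_v_cond}. Verification that $U(\zeta)\to 0$, $U'(\zeta)\to 0$, and $V(\zeta)\to V_\infty$ as $\zeta\to\infty$ is then an elementary check on the explicit formulas, establishing existence of the traveling band.

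For uniqueness, the argument is that once $c,\tau,\beta,\mu,k$ and the right-hand limits \eqref{bd_cond1} are fixed, any traveling-wave solution of \eqref{ks_red_b_sys}--\eqref{ks_red_s_sys} must solve the same first-order ODE \eqref{1-st-order-ode-2} (the integration constant is forced to zero by \eqref{bd_cond1}). Standard Picard uniqueness for the autonomous ODE \eqref{1-st-order-ode-2} on the invariant open interval $(0,C_4)$ then shows that the solution is determined by the single datum $U(0)$; reparametrizing $U(0)\in(0,C_4)$ by $C_0>1$ via \eqref{mod3_c0} gives the one-parameter family \eqref{lim_neq_u}, and Cases I and III above exclude the boundary and exterior data. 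The corresponding $V$ is then determined by quadrature from \eqref{ks_red_s_sys} together with the right-hand limit $V_\infty$, so \eqref{lim_neq_v_cond} is likewise unique.

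The main obstacle is not any single calculation—all of them were carried out en route to the theorem—but bookkeeping the parameter $C_0$ consistently. In the derivation, $C_0$ appears twice with different meanings (as the integration constant in \eqref{lim_neq_u_temp} and then, via $C_7=C_0^{-1}$, in \eqref{lim_neq_v}); one must check that the normalization \eqref{mod3_c0} reconciles these, that the condition $C_0>1$ (equivalently $U(0)<C_4$) is exactly what distinguishes the admissible Case II from the degenerate Cases I and III, and that the resulting $V$ satisfies the prescribed limit at $+\infty$ and tends to $0$ at $-\infty$ in accordance with Definition \ref{travel_band}. Once this bookkeeping is made explicit, the theorem follows.
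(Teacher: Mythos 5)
Your proposal is correct and follows essentially the same route as the paper: the paper's formal proof is a one-line reduction to its preceding Case II analysis, and your reconstruction (ansatz, elimination of $V$, one integration with the constant killed by \eqref{bd_cond1}, partial fractions, three-case classification, quadrature for $V$, plus an explicit Picard-uniqueness remark the paper leaves implicit) matches that derivation step for step. The bookkeeping concern you raise is in fact warranted: as printed, \eqref{mod3_c0} gives $U(0)=C_{4}C_{0}>C_{4}$, which contradicts both the formula \eqref{lim_neq_u} at $\zeta=0$ (which yields $U(0)=C_{4}/(1+C_{0})$) and the proof's claim that $0<U(0)<C_{4}$ — this is a typo in the paper's normalization rather than a gap in your argument.
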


\begin{proof}
Suppose $C_{0} > 1$ and $U(0)$ satisfies Eq.~\eqref{mod3_c0}. Then we have $0<U(0) < C_4$, and the result follows case II above.
\end{proof}
One can see from Eqs.~\eqref{lim_neq_u} and \eqref{lim_neq_v_cond} that 
\begin{align}\label{lim_lim}
    \lim_{\zeta \xrightarrow{} -\infty}U(\zeta) = \frac{\tau c^2}{\beta k} \text{     and     } \lim_{\zeta \xrightarrow{} -\infty}V(\zeta) = 0.
\end{align}

\begin{theorem}
Suppose $\Vec{W} = (C_0, k, \tau, c, \beta, \mu, V_{\infty}) \in \mathbb{D} = (1, \infty) \times (0, \infty)^6$. Then $U(\zeta, \Vec{W})$ is continuous in $\Vec{W}$ uniformly in $\zeta \in \mathbb{R}$. More precisely, for any compact set $\Omega \subset \mathbb{D}$, there exists $K>0$ such that, for all $\Vec{W_1}, \Vec{W_2} \in \Omega$, 
\begin{align*}
    |U(\zeta, \Vec{W_1})- U(\zeta, \Vec{W_2})|&\leq K |\Vec{W_1}-\Vec{W_2}|,\\
    |V(\zeta, \Vec{W_1})- V(\zeta, \Vec{W_2})|&\leq K |\Vec{W_1}-\Vec{W_2}|
\end{align*}
for all $\zeta \in \mathbb{R}$.
\end{theorem}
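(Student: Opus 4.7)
The plan mirrors the proof of the corresponding sensitivity theorem in the unlimited-substrate case: use the closed-form expressions~\eqref{lim_neq_u} and~\eqref{lim_neq_v_cond}, show that each partial derivative $\partial U/\partial W_i$ and $\partial V/\partial W_i$ is bounded uniformly on $\mathbb{R} \times \Omega$, and then invoke the Mean Value Theorem along the segment from $\Vec{W_1}$ to $\Vec{W_2}$ (after enlarging $\Omega$ to its convex hull, still a compact subset of $\mathbb{D}$) to extract a common Lipschitz constant $K$.

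First, I would introduce the shorthand $\alpha=2\tau c/\mu$ and rewrite
$$U(\zeta,\Vec{W}) = \frac{\tau c^2}{\beta k}\cdot\frac{1}{1+C_0 e^{\alpha\zeta}}, \qquad V(\zeta,\Vec{W}) = \frac{V_\infty}{\left(1+C_0^{-1}e^{-\alpha\zeta}\right)^{\mu/(2\beta)}}.$$
The ``easy'' parameters are $C_0$, $k$, $\beta$, and $V_\infty$: differentiation with respect to these does not produce a factor of $\zeta$, and the resulting expressions are rational combinations of $e^{\alpha\zeta}/(1+C_0 e^{\alpha\zeta})^n$ (or the corresponding quantities built from $\psi(\zeta)=1+C_0^{-1}e^{-\alpha\zeta}$) multiplied by smooth functions of $\Vec{W}$. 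Each such factor is uniformly bounded via the elementary inequality $y/(1+y)^2 \leq 1/4$ applied with $y=C_0 e^{\alpha\zeta}$ or $y=C_0^{-1}e^{-\alpha\zeta}$, and the smooth prefactors have finite suprema and infima on the compact set $\Omega$.

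The main obstacle is $\partial/\partial\tau$, $\partial/\partial c$, and $\partial/\partial\mu$: differentiating the exponent $\alpha\zeta$ pulls down a factor of $\zeta$, so the essential estimate to secure is
$$\sup_{\zeta\in\mathbb{R},\ \Vec{W}\in\Omega}\ \frac{|\zeta|\,e^{\alpha\zeta}}{(1+C_0 e^{\alpha\zeta})^2}<\infty,$$
together with its analogue for $V$. Following the bookkeeping used in the unlimited-substrate proof, I would split into $\zeta<0$ and $\zeta\geq 0$. For $\zeta<0$, $e^{\alpha\zeta}\leq 1$ and $|\zeta|e^{\alpha\zeta}$ attains its maximum at $\zeta=-1/\alpha$ with value $1/(\alpha e)$, which is bounded on $\Omega$ since $\alpha$ stays bounded away from $0$. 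For $\zeta\geq 0$ the denominator grows at least like $C_0^2 e^{2\alpha\zeta}$, so the ratio is dominated by $|\zeta|e^{-\alpha\zeta}/C_0^2$, whose supremum over $\zeta\geq 0$ is again controlled uniformly on $\Omega$. The corresponding estimate for $V$ is structurally identical after the relabelling $\zeta \mapsto -\zeta$ in the exponent of $\psi$.

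Assembling these bounds, taking the maximum of the fourteen resulting constants yields a single $K>0$ valid for all $(\zeta,\Vec{W})\in\mathbb{R}\times\Omega$. The Mean Value Theorem applied along the straight-line segment joining $\Vec{W_1}$ and $\Vec{W_2}$ then delivers the two stated Lipschitz inequalities simultaneously. No new idea beyond the previous theorem is required; the content is bookkeeping plus the $\zeta$-splitting trick that forces the exponential decay to dominate the linear growth in $\zeta$.
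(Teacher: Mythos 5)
The paper states this theorem without proof, leaving it as the analogue of the unlimited-substrate sensitivity theorem, and your proposal supplies exactly the intended argument: uniform bounds on all partial derivatives over $\mathbb{R}\times\Omega$ obtained by splitting $\zeta<0$ from $\zeta\ge 0$ so that exponential decay absorbs the linear factor of $\zeta$, followed by the Mean Value Theorem on the (convex hull of the) compact set. One minor correction to your bookkeeping: for $V$ the parameters $\beta$ and $\mu$ also enter through the exponent $-\mu/(2\beta)$, so $\partial V/\partial\beta$ carries a factor $\ln\bigl(1+C_0^{-1}e^{-\alpha\zeta}\bigr)$ that grows like $|\zeta|$ as $\zeta\to-\infty$ (so $\beta$ is not an ``easy'' parameter for $V$), but it is controlled by the same decay-beats-growth estimate you already invoke, namely $\sup_{x\ge 1} x^{-p}\ln x<\infty$ for $p$ bounded away from $0$ on $\Omega$.
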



\subsection{Discussion}\label{dis}
In this section, we provide analyses of obtained closed-form solutions of both models (\eqref{mod1} and \eqref{mod3}. We will discuss the traveling band phenomena in each case. All results are qualitative and we used the following listed parameter values that are adapted from published data \cite{Adleramino} and \cite{KELLER1971235} except $\tau$ for comparison. 

\begin{table}[h]
    \centering
    \caption{Parameter values}
    \begin{tabular}{c c c c}
    \hline
    Parameter & Description & Value & Units \\
    \hline
    $\tau$ & time interval of collision & 0.05-0.005 & hour\\
    $\mu$ & motility coefficient & 0.25 & cm$^2$/hour\\
    $c$ & band speed & 1.5 & cm/hour\\
    $\beta$ & chemotactic coefficient & 0.16-0.6 & cm$^2$/hour\\
    $d$ & $\frac{2 \beta}{\mu}$ & 0.3 - 10 & unit less\\
    $C_0$ & Integrating constant & 4 & unit less\\   \hline
    \end{tabular}
    \label{pv}
\end{table}

Graph of the solutions of the concentration of organism for the system \eqref{mod1} for different values of $\tau$ is given in Fig.~\ref{fig:unlim_eq_tau}. It depicts the dependence of the size of the traveling band on the time collision $\tau$. The band of bacteria gets wider as the value of $\tau$ gets smaller. 

\begin{figure}[!htbp]
    \centering
    \includegraphics[scale=0.5]{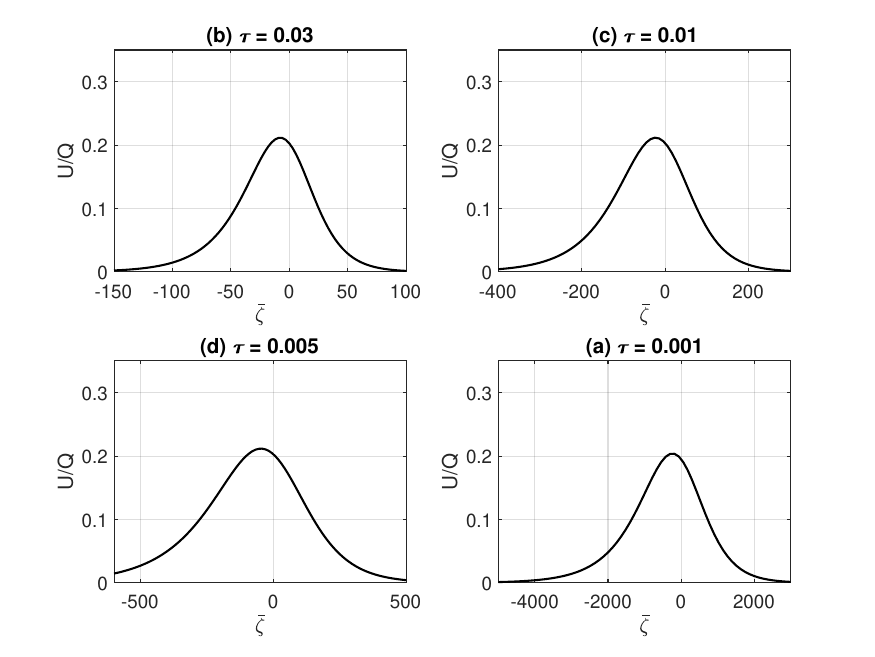}
    \caption{Concentration of bacteria $U(\zeta)$ divided by $Q =  C_0 V_{\infty}$ of model \eqref{mod1} for different values of $\tau$ with $d = 1.3$ against $\Bar{\zeta} = c \mu^{-1} \zeta$ showing the dependence of the size of bacterial band on the parameter $\tau$. The change in the range of the $\Bar{\zeta}$ axis is equivalent to the width of the band.}
    \label{fig:unlim_eq_tau}
\end{figure}

The family of the solutions of the model \eqref{mod1} converges uniformly to the solution $U_1$ and $V_1$. Fig.~\ref{fig:unlim_conv} shows that uniform convergence of (a) the concentration $U$ and (b) the concentration $V$. The dashed black line is the solution $U_1$ and $V_1$ in (a) and (b) respectively. 
Fig.~\ref{fig:unlim_conv}(a) shows the peaks of $U(\zeta)$ rise higher as the values of $d$ get smaller.
\begin{figure}[!htbp]
    \centering
    \includegraphics[scale=0.5]{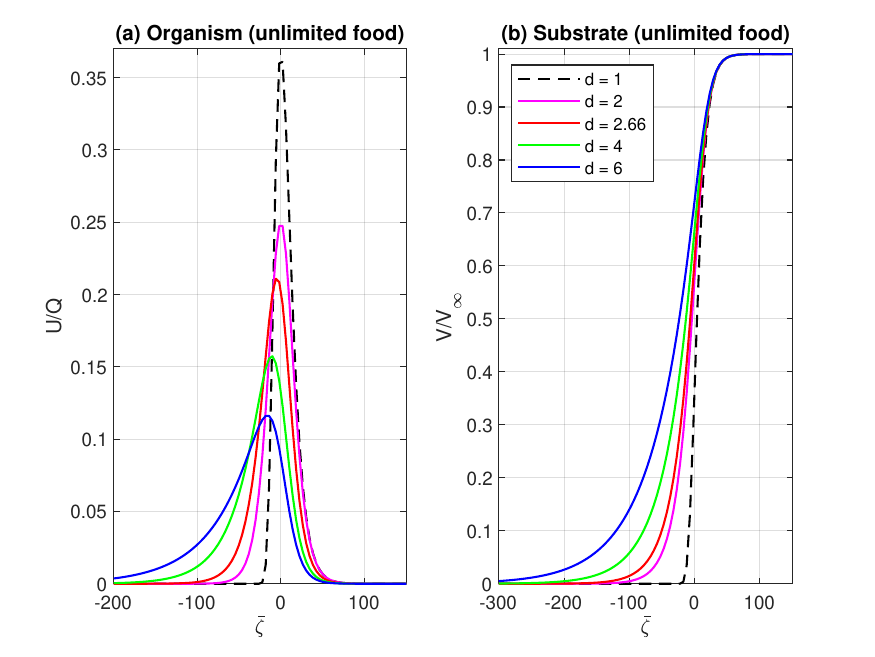}
    \caption{Showing the uniform convergence of (a) the concentration of bacteria $U(\zeta)$ divided by $Q =  C_0 V_{\infty}$ to the solution $U_1$ and (b) the concentration of the substrates $V(\zeta)$ divided by $V_{\infty}$ of model \eqref{mod1} to the solution $V_1$ as $d \to 1$.}
    \label{fig:unlim_conv}
\end{figure}

The concentration of organism and of the substrate of the model \eqref{mod3} in Figs.~\ref{fig:lim_neq} and \ref{fig:lim_neq_sub} for different values of $d$. Concentration of $U$ converge to 0 as $\zeta \to \infty$. For large negative values of $\zeta$, the concentration $U$ converges to a constant that gets reduced in size as $d$ gets larger. Therefore, in the presence of limited food, if we fix the location and look ahead for a long time, the concentration of the organism will converge to a smaller constant if the chemotaxis is bigger. And, in the case of substrates, the curve of $V$ gets flattered as $d$ increases. 
\begin{figure}[!htbp]
    \centering
    \includegraphics[scale=0.5]{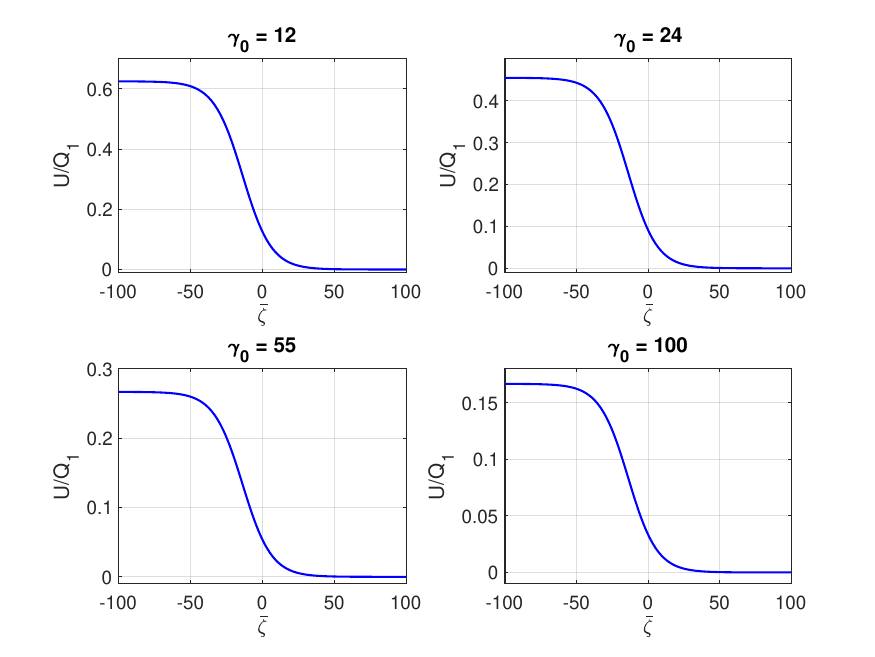}
    \caption{Concentration of bacteria $U(\zeta)$ divided by $Q_{1} =  2 \tau c^{2} k^{-1}\beta^{-1}$ of model \eqref{mod3} for different values of $d$ against $\zeta = c \mu^{-1} \zeta$.}
    \label{fig:lim_neq}
\end{figure}

\begin{figure}[!htbp]
    \centering
    \includegraphics[scale=0.5]{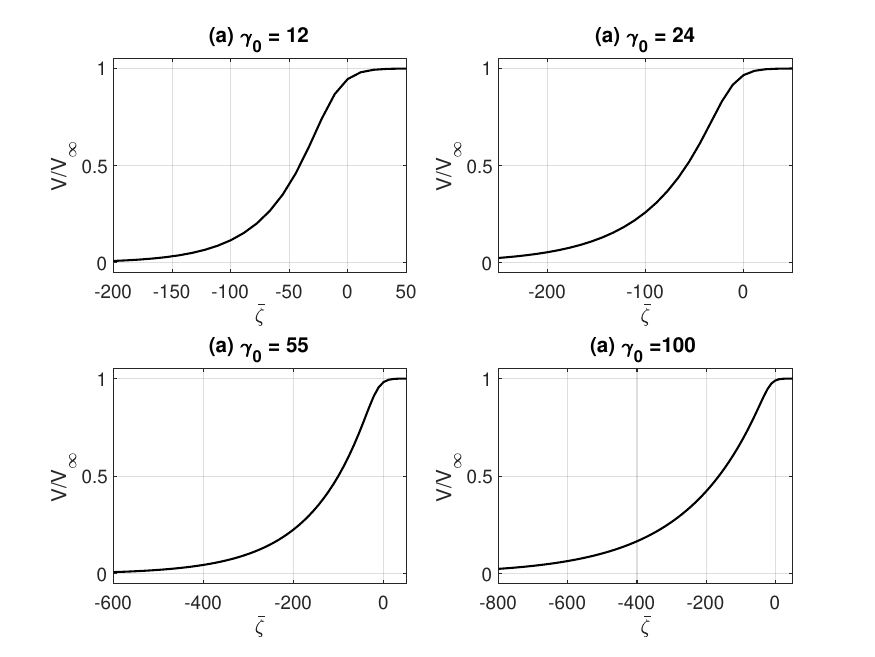}
    \caption{Same as in Fig. \ref{fig:lim_neq} for the concentration of the substrates $V(\zeta)$ divided by $V_{\infty}$ of model \eqref{mod3}.}
    \label{fig:lim_neq_sub}
\end{figure}
\section{Linear stability analysis of the model}\label{lstab}

In this section, we focused on the classic Keller-Segel type model based on the derivation in \eqref{Einstein}. 
Rewriting \eqref{main_mod} with $u(x,t)$ being the concentration of bacteria and $v(x,t)$ of the chemical substrate or attractant, we get
\begin{subequations}\label{slime_mod}
\begin{align}
    \tau \frac{\partial u}{\partial t} + \beta \frac{\partial}{\partial x}\left(u \frac{\partial \ln v}{\partial x}\right) -\frac{\mu}{2}\frac{\partial^2 u}{\partial x^2}&= 0,\label{slime_1}\\
    \tau \frac{\partial v}{\partial t} - \frac{D}{2} \frac{\partial^2 v}{\partial x^2} + H(u,v)&=0\label{slime_2}.
\end{align}
\end{subequations}

In Eqs.~\eqref{slime_1} and \eqref{slime_2}, $\beta$ is the chemoattractant coefficient, $\mu$, and $D$ are the diffusion coefficients of bacteria and chemical substrate respectively, $H(u,v)$ represents the net production of chemical substrate.

Our primary goal is to study the process of aggregation which indicates instability in the uniform configuration of bacteria and chemical substrate. We Assume a homogeneous population of cells throughout the system prior to aggregation. The uniform distribution for $u(x,t)$ and $v(x,t)$ are called to be an equilibrium state. Let's denote these equilibrium distributions by $u_0$ and $v_0$ accordingly. Here, $u_0>0$ and $v_0>0$ both are constants and $H(u_0,v_0)=0$. We are looking for solutions for Eqs.~\eqref{slime_mod} near equilibrium.

We begin by considering the domain $\Omega = (0,L), L>0$ and the small perturbations, 
\begin{align}\label{pert}
    u = u_0+\Bar{u}(x,t) \text{   and   } v = v_0 +\Bar{v}(x,t).
\end{align}

Then the corresponding homogeneous Dirichlet boundary conditions on the left boundary and Neumann boundary conditions on the right boundary are
\begin{align}
    \Bar{u}(0,t)= \Bar{v}(0,t)=0,\label{bd_d}\\
    \Bar{u}_x(L,t)= \Bar{v}_x(L,t)=0\label{bd_n}.
\end{align}

Assuming $\Bar{u}$, $\Bar{u}_x$, $\Bar{v}$, $\Bar{v}_x, \Bar{v}_{xx}$ to be small and using Eq.~\eqref{pert} from Eq.~\eqref{slime_1}, we get
\begin{align*}
    \tau \frac{\partial \Bar{u}}{\partial t}+ \beta \frac{u_0}{v_0}\frac{\partial^2 \Bar{v}}{\partial x^2} -\frac{\mu}{2}\frac{\partial^2 \Bar{u}}{\partial x^2}=0.
\end{align*}

Then, since $H(u_0,v_0)=0$ and ignoring higher-order terms, we have the following approximation
\begin{align}\label{Huv_comp}
     H(u,v) \approx \left(\frac{\partial H}{\partial u}(u_0,v_0)\right)\Bar{u} + \left(\frac{\partial H}{\partial v}(u_0,v_0)\right)\Bar{v}=-a\Bar{u}+d\Bar{v}.
\end{align}

Here, $a=-\frac{\partial H}{\partial u}(u_0,v_0)$ is the production of chemical substrate and $d=\frac{\partial H}{\partial v}(u_0,v_0)$ is the degradation of substrate.

Substituting Eq.~\eqref{Huv_comp} in Eq.~\eqref{slime_2}, we get equations for perturbation $\bar{u}(x,t)$ and $\bar{v}(x,t)$ as follows

\begin{align}
   \tau \frac{\partial \Bar{u}}{\partial t}+ \beta \frac{u_0}{v_0}\frac{\partial^2 \Bar{v}}{\partial x^2} -\frac{\mu}{2}\frac{\partial^2 \Bar{u}}{\partial x^2}=0.\label{pert_u} \\ 
   \tau \frac{\partial \Bar{v}}{\partial t} - \frac{D}{2} \frac{\partial^2 \Bar{v}}{\partial x^2} - a\Bar{u}+d\Bar{v}=0.\label{pert_v} 
\end{align}

Thanks to the above biological meaning of $a$ and $d$, we can naturally assume the following.
\begin{assumption}\label{sign}
Assume that $a>0$ and $d>0$.
\end{assumption}

To establish conditions for instability, we look for solutions in the form
\begin{align}\label{eigen_soln}
    \Bar{u}(x,t) = u^* e^{\sigma t} \Phi_{\lambda}(x) \text{   and   } \Bar{v}(x,t) = v^* e^{\sigma t} \Phi_{\lambda}(x).
\end{align}

Here, $u^*, v^*>0$ and $\Phi_{\lambda}(x)$ is a non-zero function that satisfies 
\begin{align}\label{ev}
    \Phi_{\lambda}''(x) = - \lambda^2 \Phi_{\lambda}(x) \text{   on   } \Omega, \ \ \Phi_{\lambda}(0)=0,  \text{  and  } \Phi_{\lambda}'(L)=0.
\end{align}

In other words, $\Phi_{\lambda}(x)$ is an eigenfunction of the eigenvalue problem \eqref{ev}. Explicitly, $\lambda = \lambda_n$, where
$$\lambda_n=\frac{1}{L}\left(2 \pi n+\frac{\pi}{2}\right) \text{     for    }n\in \{0\} \cup \mathbb{N},$$
and $\Phi_{\lambda}(x) = \Phi_{\lambda_n}(x) = \sin(\lambda_n x)$.

Let $\lambda = \lambda_n$ for a fixed number $n\in \{0\} \cup \mathbb{N}$. Then substituting Eq.~\eqref{eigen_soln} in Eq.~\eqref{pert_u} and \eqref{pert_v}, we obtain
\begin{subequations}\label{lin_sys}
    \begin{align}
        \big(\tau \sigma+ \frac{\mu
        }{2} \lambda^2 \big) u^* - \beta \frac{u_0}{v_0} \lambda^2 v^* & = 0,\\
        -a u^* + (\tau \sigma+ \frac{D}{2}\lambda^2 +d) v^* &=0.
    \end{align}
\end{subequations}

A non-trivial solution $(u^*, v^*)$ exists if and only if
\begin{align*}
\det\begin{pmatrix}
\tau \sigma+ \frac{\mu
}{2} \lambda^2 & - \beta \frac{u_0}{v_0} \lambda^2\\
-a & \tau \sigma+ \frac{D}{2}\lambda^2 +d \\
\end{pmatrix} =0.
\end{align*}
Then we get the following quadratic equation
\begin{align}\label{quad}
    \sigma^2 + b \sigma + c=0
\end{align}
with 
\begin{align}
    b &= \frac{1}{\tau} \left(\frac{D}{2}\lambda^2+\frac{\mu}{2}\lambda^2+d\right),\label{b}\\
    c &= \frac{\lambda^2}{\tau^2} \left(\frac{\mu D}{4}\lambda^2+ \frac{\mu d}{2} - a \beta \frac{u_0}{v_0}\right)\label{c_cond}.
\end{align}
The roots of Eq.~\eqref{quad} are
\begin{align}\label{eignv}
    \sigma_{i} = \frac{-b \mp \sqrt{b^2 - 4c}}{2} , \text{   for   } i = 1,2.
\end{align}

To analyze the stability, we need to investigate the sign of $\Re(\sigma_i)$ -- the real part of $\sigma_i$ -- in Eq.\eqref{eignv}. 
If $\Re(\sigma_i)>0$ for either $i=1$ or $i=2$, then perturbations would grow with time which corresponds to an unstable condition near the equilibrium. Recall, each of the parameters $\mu, \lambda, \tau, \beta, k, D, u_0$, $v_0$, $a$ and $d$ are positive. Therefore, the sign of $b$ is always positive and the sign of $c$ will determine the sign of $\Re(\sigma_i)$. Because $b>0$, we find that

\begin{center}
    \begin{tabular}[h]{c|c}
         \hline
         $c>0$ & $\Re(\sigma_i)<0$ for $i=1,2$\\
    $c<0$ & $\sigma_1<0, \sigma_2>0$\\
    $c=0$ & $\sigma_{1} = -b$, $\sigma_{2} = 0$\\
    \hline 
    \end{tabular}
    \label{sign_tab}
\end{center}

Thanks to the above table, the system is linearly unstable if $c<0$ which, by Eq.~\eqref{c_cond}, is equivalent to 
\begin{align}
    a> \frac{v_0}{2 u_0} \frac{\mu}{\beta} \left(\frac{D \lambda^2}{2}+d\right)= \frac{v_0}{2 u_0} \frac{\mu}{\beta} \left(\frac{D}{2L^2}\left(2 \pi n+\frac{\pi}{2}\right)^2+d\right)\label{ins_cond}.
\end{align}

Hence, we have the following theorem.
\begin{theorem}
Under Assumption \ref{sign}, if 
\begin{align}\label{n=0}
    a> \frac{v_0}{16 u_0} \frac{\mu}{\beta} \left(\frac{D\pi^2}{L^2}+8d\right),
\end{align}
then the trivial solution $(0,0)$ of the linearized system \eqref{pert_u} and \eqref{pert_v} is unstable. Consequently, the steady state $(u_0,v_0)$ of system \eqref{slime_mod} is linearly unstable.
\end{theorem}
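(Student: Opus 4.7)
The plan is to specialize the instability criterion \eqref{ins_cond}, which has already been derived just above the statement, to the smallest admissible wavenumber. The linearization analysis decomposed perturbations into the eigenmodes $\Phi_{\lambda_n}(x)=\sin(\lambda_n x)$ with $\lambda_n=(2\pi n+\pi/2)/L$ of the mixed Dirichlet–Neumann eigenvalue problem \eqref{ev}, and reduced instability to the existence of at least one $n\in\{0\}\cup\mathbb{N}$ for which the coefficient $c$ of the characteristic quadratic \eqref{quad} is negative. The sign table showed that $c<0$ yields $\sigma_2>0$, i.e. exponential growth of the corresponding mode. Thus my task is only to identify one $n$ for which \eqref{ins_cond} holds under the assumed bound \eqref{n=0}.

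The key observation is that the right-hand side of \eqref{ins_cond} depends on $n$ monotonically through the factor $(2\pi n+\pi/2)^2$, so it is strictly increasing in $n$. Consequently, the easiest case to satisfy is $n=0$, where $\lambda_0=\pi/(2L)$ and $\lambda_0^2=\pi^2/(4L^2)$. Plugging this into \eqref{ins_cond} and simplifying gives
\begin{equation*}
\frac{v_0}{2u_0}\,\frac{\mu}{\beta}\!\left(\frac{D}{2L^2}\!\left(\frac{\pi}{2}\right)^{\!2}\!+d\right)=\frac{v_0}{16u_0}\,\frac{\mu}{\beta}\!\left(\frac{D\pi^2}{L^2}+8d\right),
\end{equation*}
which is precisely the hypothesis \eqref{n=0}. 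So under that hypothesis the strict inequality \eqref{ins_cond} holds for $n=0$, hence $c<0$ for that mode.

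With $c<0$ and $b>0$ (guaranteed by Assumption \ref{sign} and positivity of the physical constants), the roots of the quadratic \eqref{quad} give $\sigma_2>0$; the corresponding nonzero amplitude pair $(u^*,v^*)$ can be read off from either row of the singular system \eqref{lin_sys}, e.g. $u^*=\beta(u_0/v_0)\lambda_0^2$ and $v^*=\tau\sigma_2+(\mu/2)\lambda_0^2$. The resulting perturbation $(\bar u,\bar v)=(u^*,v^*)e^{\sigma_2 t}\sin(\pi x/(2L))$ satisfies \eqref{pert_u}–\eqref{pert_v} together with the boundary conditions \eqref{bd_d}–\eqref{bd_n}, and grows exponentially in time; this exhibits linear instability of the trivial solution of the linearized system, and consequently of the equilibrium $(u_0,v_0)$ of \eqref{slime_mod}. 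I do not anticipate any real obstacle: the entire argument is a substitution of the smallest eigenvalue into a criterion already established in the derivation leading up to \eqref{ins_cond}. The only small care is to keep the inequality strict, so that $\sigma_2$ is genuinely positive rather than zero.
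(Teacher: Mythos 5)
Your proposal is correct and follows essentially the same route as the paper: take the $n=0$ mode with $\lambda_0=\pi/(2L)$, note that the hypothesis is exactly the instability criterion \eqref{ins_cond} specialized to that mode, conclude $c<0$ so the quadratic \eqref{quad} has a positive root, and exhibit the exponentially growing eigenmode. The only cosmetic difference is that the paper additionally scales the amplitudes $u^*,v^*$ into $(0,\epsilon)$ to make the "arbitrarily small initial perturbation" aspect of instability explicit, which your argument accommodates trivially.
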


\begin{proof}
Given $\epsilon>0$. Taking $n=0$, then $\lambda=\lambda_0$ and Eq.~\eqref{ins_cond} becomes \eqref{n=0}. Let $\sigma$ be the positive root of \eqref{quad}. Then there exist solutions $u^*$ and $v^*$ belonging to $(0, \epsilon)$ of the system \eqref{lin_sys}. Note that 
\begin{align*}
    \Phi_{\lambda}(x) = \Phi_{\lambda_0}(x) = \sin{\left(\frac{\pi x}{2L}\right)} \in (0,1) \text{     for all    } 0<x<L.
\end{align*} 

It follows that the solution $(\Bar{u}(x,t), \Bar{v}(x,t))$ given by \eqref{eigen_soln} of the linearized system \eqref{pert_u}, \eqref{pert_v} satisfies $\Bar{u}(x,0), \Bar{v}(x,0) \in (0, \epsilon)$ for all $x \in (0, L)$, and 
$$\lim_{t \to \infty} \Bar{u}(x,t) = \infty \text{     and     } \lim_{t \to \infty} \Bar{v}(x,t) = \infty \text{     for any     } x \in (0,L).$$

Therefore, the trivial solution $(0,0)$ of the linearized system \eqref{pert_u} and \eqref{pert_v} is unstable.
\end{proof}


The examination of the instability condition \eqref{ins_cond} tells us that instability occurs when the production of the chemical substrate passes a certain threshold. 
The production of the substrate must outweigh the local diffusion of bacteria and substrate. Also, as $L$ gets bigger there is more possibility for $a$ to overcome the threshold and make the onset of aggregation. Thus, in a large domain, less production of substrate instigates the process of aggregation. A large value of chemotactic factor or a slow degradation in substrate level tends to make the perturbations grow.       


\subsection{Linear stability via the energy method}\label{stab}

This section is dedicated to establishing the linear stability for the system \eqref{slime_mod} by using the linearized system \eqref{pert_u} and \eqref{pert_v}. For convenience, we re-denote $u = \Bar{u}$ and $v = \Bar{v}$ in Eqs.~\eqref{pert_u}, \eqref{pert_v} and conditions \eqref{bd_d}, \eqref{bd_n}. Therefore, we have the system 
\begin{subequations}\label{sys_stab}
    \begin{align}
    \tau \frac{\partial u}{\partial t}+ \frac{u_0}{v_0} \beta \frac{\partial^2 v}{\partial x^2} -\frac{\mu}{2}\frac{\partial^2 u}{\partial x^2}=0,\label{pert_u_stab}\\
    \tau \frac{\partial v}{\partial t} - \frac{D}{2} \frac{\partial^2 v}{\partial x^2} - au+dv=0,\label{pert_v_stab}
\end{align}
\end{subequations}
with the boundary conditions
\begin{align}
    u(0,t)= v(0,t)=0,\label{bc1}\\
    u_x(L,t)= v_x(L,t)=0\label{bc2}.
\end{align}

In studying the stability of the PDE system \eqref{sys_stab}, we will use the following well-known inequalities in Lemmas \ref{c_p} and \ref{cp} below. 
\begin{lemma}[Poincar\'e's inequality]\label{c_p}
Let $L>0$. If $u\in C^1([0,L])$ satisfies 
$$u(0)=0 \text{     or     } u(L)=0,$$ 
then 
\begin{align}\label{poin}
    \int_0^L \left(u'(x)\right)^2 dx \geq C_p \int_0^L u^2(x) dx,
\end{align}
where
\begin{equation}\label{cp-const}
 C_p=\frac{2}{L^2}.   
\end{equation} 
\end{lemma}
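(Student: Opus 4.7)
The plan is to apply the fundamental theorem of calculus to reconstruct $u$ from $u'$, combine it with Cauchy--Schwarz to produce a pointwise bound on $u^2(x)$, and then integrate that bound over $[0,L]$. Because the inequality is symmetric under the reflection $x \mapsto L-x$, I would first reduce to the case $u(0)=0$ without loss of generality; the case $u(L)=0$ follows by applying the same argument to the reflected function $\tilde u(x) = u(L-x)$, which still has $\int_0^L (\tilde u')^2 = \int_0^L (u')^2$ and similarly for the $L^2$ norm.

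Assuming $u(0)=0$, I would write, for each $x \in [0,L]$,
\begin{equation*}
u(x) \;=\; \int_0^x u'(s)\, ds,
\end{equation*}
and then apply Cauchy--Schwarz with the constant function $1$ to obtain
\begin{equation*}
u(x)^2 \;\leq\; \Bigl(\int_0^x 1^2\, ds\Bigr)\Bigl(\int_0^x (u'(s))^2\, ds\Bigr) \;\leq\; x \int_0^L (u'(s))^2\, ds.
\end{equation*}
The second inequality uses only the nonnegativity of $(u')^2$ to enlarge the integration interval to $[0,L]$, which is harmless and yields a bound whose $x$-dependence is simple enough to integrate in closed form.

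Next I would integrate this pointwise bound over $[0,L]$:
\begin{equation*}
\int_0^L u(x)^2\, dx \;\leq\; \Bigl(\int_0^L x\, dx\Bigr)\int_0^L (u'(s))^2\, ds \;=\; \frac{L^2}{2}\int_0^L (u'(s))^2\, ds.
\end{equation*}
Rearranging gives $\int_0^L (u')^2\, dx \geq (2/L^2)\int_0^L u^2\, dx$, which is exactly \eqref{poin} with the claimed constant $C_p = 2/L^2$ in \eqref{cp-const}. There is no genuine obstacle here; the only subtlety worth flagging is that the constant $2/L^2$ produced by this elementary argument is not the sharp Poincar\'e constant for the mixed Dirichlet/free endpoint problem (the sharp value is $\pi^2/(4L^2)$, attained by the first eigenfunction of \eqref{ev} with $n=0$), but $2/L^2$ is sufficient for the subsequent energy estimates, so no further work is needed.
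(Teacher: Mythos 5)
Your proof is correct and follows essentially the same route as the paper: the fundamental theorem of calculus, Cauchy--Schwarz against the constant function, and integration of the resulting pointwise bound $u^2(x)\le x\int_0^L(u')^2$. The only cosmetic difference is that you dispatch the $u(L)=0$ case by reflection, whereas the paper repeats the computation with $\int_x^L u'$ and the weight $L-x$; both give the same constant $2/L^2$.
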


\begin{proof}
Although inequality is well known, we quickly present its proof for the sake of completeness. First, consider $u(0) = 0$. For $x \in [0,L]$, one has 
\begin{equation*}
    u^2(x) =\left(\int_0^x u'(\xi)d\xi\right)^2 \leq\left(\int_0^x|u'(\xi)|d\xi \right)^2.
\end{equation*}
Applying Holder's inequality gives
\begin{equation*}
    u^2(x) \leq x\cdot\left(\int_0^x |u'(\xi)|^2 d \xi\right)\leq x\cdot\left(\int_0^L |u'(\xi)|^2 d\xi\right).
\end{equation*}
Integrating in $x$ yields
\begin{equation*}
    \int_0^L u^2(x)dx\leq \frac{L^2}{2}\cdot\int_0^L |u'(x)|^2 dx,
\end{equation*}
which proves the inequality \eqref{cp-const}.

Next, consider $u(L) = 0$. For $x \in [0,L]$, one has 
\begin{equation*}
    u^2(x) =\left(-\int_x^L u'(\xi)d\xi\right)^2 \leq \left(\int_x^L|u'(\xi)|d\xi \right)^2.
\end{equation*}
Applying Holder's inequality gives
\begin{equation*}
    u^2(x) \leq (L-x) \cdot\left(\int_0^x |u'(\xi)|^2 d \xi\right)\leq (L-x)\cdot\left(\int_0^L |u'(\xi)|^2 d\xi\right).
\end{equation*}
Integrating in $x$ yields
\begin{equation*}
    \int_0^L u^2(x)dx\leq \frac{L^2}{2}\cdot\int_0^L |u'(x)|^2 dx,
\end{equation*}
which proves the inequality \eqref{cp-const}.
\end{proof}

\begin{lemma}\label{cp}
Let $L>0$. If $u\in C^1([0,L])$ satisfies 
$$u(0)=0 \text{     or     } u(L)=0,$$ 
then 
\begin{align}\label{poinl}
    \sup_{x \in [0,L]}\left|{u(x)}\right| \le \sqrt{L} \left(\int_0^L \left|u'(x)\right|^2 dx\right)^{\frac{1}{2}}.
\end{align}

\end{lemma}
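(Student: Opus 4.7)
The plan is to prove this by the fundamental theorem of calculus followed by Hölder's (Cauchy-Schwarz) inequality, in exactly the same spirit as the proof of Lemma \ref{c_p}. The key observation is that the boundary condition at one endpoint lets us represent $u(x)$ pointwise as an integral of $u'$, and then an $L^2$--$L^2$ Cauchy-Schwarz bound on that integral produces a square root of length times the $L^2$ norm of $u'$.

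First I would split into the two boundary cases. If $u(0)=0$, then for any $x\in[0,L]$ I write
\begin{equation*}
u(x)=\int_0^x u'(\xi)\,d\xi,
\end{equation*}
so that $|u(x)|\le \int_0^x |u'(\xi)|\,d\xi$. Applying Hölder's inequality with the constant function $1$ gives
\begin{equation*}
|u(x)| \le \sqrt{x}\left(\int_0^x |u'(\xi)|^2\,d\xi\right)^{1/2} \le \sqrt{L}\left(\int_0^L |u'(\xi)|^2\,d\xi\right)^{1/2}.
\end{equation*}
Taking the supremum over $x\in[0,L]$ yields \eqref{poinl}. The case $u(L)=0$ is handled symmetrically by writing $u(x)=-\int_x^L u'(\xi)\,d\xi$, bounding $|u(x)|\le \sqrt{L-x}\bigl(\int_x^L|u'|^2\bigr)^{1/2}\le \sqrt{L}\bigl(\int_0^L|u'|^2\bigr)^{1/2}$, and again taking the supremum.

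There is no real obstacle here; the argument is a textbook one-dimensional Sobolev embedding $H^1_0\hookrightarrow L^\infty$ (with one endpoint condition sufficing because the interval is bounded). The only subtlety is to be careful with the factor $\sqrt{x}$ vs.\ $\sqrt{L-x}$ from Hölder, both of which are trivially dominated by $\sqrt{L}$, so the stated constant is sharp up to this standard estimate. No additional machinery beyond what was used for Lemma \ref{c_p} is required.
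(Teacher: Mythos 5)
Your proof is correct and follows essentially the same route as the paper's: represent $u(x)$ via the Fundamental Theorem of Calculus from the vanishing endpoint, then apply H\"older's inequality and bound the resulting length factor by $\sqrt{L}$. The only cosmetic difference is that you retain the sharper intermediate factors $\sqrt{x}$ and $\sqrt{L-x}$ before relaxing to $\sqrt{L}$, whereas the paper enlarges the domain of integration to $[0,L]$ first; both yield the stated constant.
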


\begin{proof}
Again, an elementary proof is given here for the sake of completeness. Let $x \in [0,L]$. If $u(0) = 0$, then by the Fundamental Theorem of Calculus and H\"older's inequality, one has 
\begin{equation*}
    \left|u(x)\right| =\left|\int_0^x u'(\xi)d\xi\right| \leq\int_0^L|u'(\xi)|d\xi \le \sqrt{L} \left(\int_0^L \left|u'(\xi)\right|^2 d\xi\right)^{\frac{1}{2}}.
\end{equation*}

If $u(L) = 0$, then one similarly has
\begin{equation*}
    \left|u(x)\right| =\left|-\int_x^L u'(\xi)d\xi\right| \leq\int_0^L|u'(\xi)|d\xi \le \sqrt{L} \left(\int_0^L \left|u'(\xi)\right|^2 d\xi\right)^{\frac{1}{2}}.
\end{equation*}

Therefore, we obtain the inequality \eqref{poinl}.
\end{proof}

First, we have the following stability result for the $L^2$-norm.

\begin{theorem}
If 
\begin{align}
    \frac{2 u_0}{v_0} \beta \le \mu \text{     and     } \frac{2 u_0}{v_0} \beta \le D, \label{mu_D_stab}\\
    4 a \le C_p \mu \text{     and     } 4 a \le C_pD + d \label{mu_D2_stab},
\end{align}
then there is a number $A>0$ such that 
\begin{align}\label{stab_eq}
    \int_0^L (u^2 (x,t)+ v^2 (x,t)) dx \le e^{-At}\int_0^L (u^2 (x,0) + v^2 (x,0)) dx 
\end{align}
for all $t\ge0$.

Consequently, the steady state $(u_0,v_0)$ of system \eqref{slime_mod} is linearly stable with respect to the $L^2$-norm over the interval $[0,L]$.
\end{theorem}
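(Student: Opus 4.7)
The plan is the standard energy method adapted to the coupled linear system \eqref{sys_stab}. First I would test \eqref{pert_u_stab} against $u$ and \eqref{pert_v_stab} against $v$, integrate over $[0,L]$ in $x$, and integrate by parts in the diffusion and cross-diffusion terms. The boundary contributions $[uu_x]_0^L$, $[uv_x]_0^L$, and $[vv_x]_0^L$ all vanish because $u(0,t)=v(0,t)=0$ from \eqref{bc1} and $u_x(L,t)=v_x(L,t)=0$ from \eqref{bc2}. Adding the two identities yields the single energy balance
\begin{equation*}
\frac{\tau}{2}\frac{d}{dt}\int_0^L(u^2+v^2)\,dx
=-\frac{\mu}{2}\int_0^L u_x^2\,dx-\frac{D}{2}\int_0^L v_x^2\,dx
+\frac{u_0\beta}{v_0}\int_0^L u_x v_x\,dx+a\int_0^L uv\,dx-d\int_0^L v^2\,dx.
\end{equation*}

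Next I would control the two indefinite cross terms by the symmetric Young inequality, $|\int u_x v_x|\le\tfrac12\int u_x^2+\tfrac12\int v_x^2$ and $|\int uv|\le\tfrac12\int u^2+\tfrac12\int v^2$. The first pair of hypotheses $\tfrac{2u_0}{v_0}\beta\le\mu$ and $\tfrac{2u_0}{v_0}\beta\le D$ is tailored exactly so that the coefficients of $\int u_x^2$ and $\int v_x^2$ remain nonnegative after absorbing the chemotactic cross term; in fact they are at least $\mu/4$ and $D/4$ respectively. At this stage the inequality reads
\begin{equation*}
\frac{\tau}{2}\frac{d}{dt}\int_0^L(u^2+v^2)\,dx
\le -\frac{\mu}{4}\int_0^L u_x^2\,dx-\frac{D}{4}\int_0^L v_x^2\,dx
+\frac{a}{2}\int_0^L u^2\,dx+\Bigl(\frac{a}{2}-d\Bigr)\int_0^L v^2\,dx.
\end{equation*}

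I would then invoke Poincar\'e's inequality (Lemma~\ref{c_p}), which applies because of the homogeneous Dirichlet trace at $x=0$, to replace $\int u_x^2$ by $C_p\int u^2$ and $\int v_x^2$ by $C_p\int v^2$. The second pair of hypotheses $4a\le C_p\mu$ and $4a\le C_pD+d$ is designed precisely so that the resulting coefficients $C_p\mu/4-a/2$ and $C_pD/4+d-a/2$ are strictly positive (at least $C_p\mu/8$ and $C_pD/8+7d/8$, respectively). Setting $A'=\min\{C_p\mu/8,\,C_pD/8+7d/8\}>0$ then gives the differential inequality $\frac{d}{dt}\int_0^L(u^2+v^2)\,dx\le -(2A'/\tau)\int_0^L(u^2+v^2)\,dx$, and a straightforward Gr\"onwall argument yields \eqref{stab_eq} with $A=2A'/\tau$.

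The only delicate step is the bookkeeping in the absorbing inequalities: one has to pick the weights in Young's inequality (symmetric weights work) so that the four hypothesized inequalities are matched exactly to the four coefficients that need to stay positive. Everything else is routine integration by parts, Poincar\'e, and Gr\"onwall.
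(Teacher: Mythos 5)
Your proposal is correct and follows essentially the same route as the paper: test against $u$ and $v$, integrate by parts using the mixed boundary conditions, absorb the two cross terms with Cauchy--Schwarz/Young under hypotheses \eqref{mu_D_stab} and \eqref{mu_D2_stab}, apply Poincar\'e's inequality, and close with Gr\"onwall. Your surviving coefficients $\mu/4$, $D/4$ and then $C_p\mu/8$, $C_pD/8+7d/8$ coincide exactly with those in the paper's proof (which phrases the absorption as completing a square rather than symmetric Young, an equivalent bookkeeping), so the decay rate you obtain is the same.
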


\begin{proof}
Multiplying Eq.~\eqref{pert_u_stab} by $u$ and Eq.~\eqref{pert_v_stab} by $v$ and integrating in $x$ over $(0,L)$, we have
\begin{align*}
    \frac{\tau}{2} \frac{d }{d t}\int_0^L u^2 dx + \frac{u_0}{v_0} \beta \int_0^L u \frac{\partial^2 v}{\partial x^2} dx -\frac{\mu}{2} \int_0^L u \frac{\partial^2 u}{\partial x^2} dx=0,\\
    \frac{\tau}{2} \frac{d }{d t} \int_0^L v^2 dx- \frac{D}{2} \int_0^L v \frac{\partial^2 v}{\partial x^2} dx- a \int_0^L v u dx+d \int_0^L v^2 dx=0.
\end{align*}

Integration by parts and adding both equations yields
\begin{align}
    &\frac{\tau}{2} \frac{d }{d t}\left(\int_0^L u^2 dx+ \int_0^L v^2 dx\right)+ \frac{\mu}{4} \int_0^L \left(\frac{\partial u}{\partial x}\right)^2 dx - \frac{u_0}{v_0} \beta \int_0^L \frac{\partial u}{\partial x} \frac{\partial v}{\partial x} dx \nonumber \\
    & \quad + \frac{D}{4} \int_0^L \left(\frac{\partial v}{\partial x}\right)^2 dx  
    +\frac{\mu}{4} \int_0^L \left(\frac{\partial u}{\partial x}\right)^2 dx+ \frac{D}{4} \int_0^L \left(\frac{\partial v}{\partial x}\right)^2 dx \nonumber \\
    & \quad - a \int_0^L v u dx + d \int_0^L v^2 dx=0\label{add}.
\end{align}

By the condition \eqref{mu_D_stab} and H\"older's inequality
\begin{align*}
    \frac{\mu}{4} \int_0^L \left(\frac{\partial u}{\partial x}\right)^2 dx - \frac{u_0}{v_0} \beta \int_0^L \frac{\partial u}{\partial x} \frac{\partial v}{\partial x} dx + \frac{D}{4} \int_0^L \left(\frac{\partial v}{\partial x}\right)^2 dx \nonumber \\
    \quad \ge \frac{1}{2} \frac{u_0}{v_0} \beta \left(\sqrt{\int_0^L \left(\frac{\partial u}{\partial x}\right)^2 dx} - \sqrt{\int_0^L \left(\frac{\partial v}{\partial x}\right)^2 dx}\right)^2\ge 0.
\end{align*}

Therefore, we obtain
\begin{align*}
    &\frac{\tau}{2} \frac{d }{d t}\left(\int_0^L u^2 dx+ \int_0^L v^2 dx\right)
    +\frac{\mu}{4} \int_0^L \left(\frac{\partial u}{\partial x}\right)^2 dx+ \frac{D}{4} \int_0^L \left(\frac{\partial v}{\partial x}\right)^2 dx \nonumber \\
    & \quad -a \int_0^L v u dx+ d \int_0^L v^2 dx\le0.
\end{align*}

Applying Poincar\'e's inequality \eqref{poin}, we get
\begin{align*}
    &\frac{\tau}{2} \frac{d }{d t}\left(\int_0^L u^2 dx+ \int_0^L v^2 dx\right)+ \frac{C_p\mu}{8} \int_0^L u^2 dx+ \left(\frac{C_p D}{8} +\frac{7 d}{8}\right) \int_0^L v^2 dx \nonumber \\
    & \quad +\frac{C_p\mu}{8} \int_0^L u^2 dx- a \int_0^L v u dx + \left(\frac{C_p D}{8} + \frac{d}{8}\right) \int_0^L v^2 dx  \le 0.
\end{align*}

By the condition \eqref{mu_D2_stab} and H\"older's inequality, we find
\begin{align*}
    \frac{C_p\mu}{8} \int_0^L u^2 dx- a \int_0^L v u dx+ \left(\frac{C_p D}{8} + \frac{d}{8}\right) \int_0^L v^2 dx \nonumber \\
    \quad \ge \frac{1}{2} a \left(\sqrt{\int_0^L u^2 dx} - \sqrt{\int_0^L v^2 dx}\right)^2\ge 0.
\end{align*}

Hence, it follows that
\begin{align}
    &\frac{\tau}{2} \frac{d }{d t}\left(\int_0^L u^2 dx+ \int_0^L v^2 dx\right)+ \frac{C_p\mu}{8} \int_0^L u^2 dx+ \left(\frac{C_p D}{8} +\frac{7 d}{8}\right) \int_0^L v^2 dx \le 0\label{energy}.
\end{align}

By denoting $C = \min\{\frac{C_p\mu}{4},\frac{C_p D}{4} +\frac{7 d}{4}\}$, we have from Eq.~\eqref{energy}  
\begin{align*}
    \tau \frac{d }{d t}I(t) + C I(t) &\le 0,
\end{align*}
with $I(t) = \int_0^L u^2 (x,t) dx+ \int_0^L v^2 (x,t) dx$.

By Gr\"onwall's inequality, we find
\begin{align*}
    I(t) \le I(0) e^{-\frac{C}{\tau}t}
\end{align*}
for all $t\ge 0$. Therefore, we obtain \eqref{stab_eq}.
\end{proof}

Next, we have the stability for the $H^1$-norm and $L^\infty$-norm.
\begin{theorem}
Assume \eqref{mu_D_stab} and \eqref{mu_D2_stab}. Then 
\begin{align}\label{stab_eq_2}
    \int_0^L (u_x^2 (x,t)+ v_x^2 (x,t)) dx \le M_0^2 e^{-Bt} \text{   
  for all $t\ge0$,}
\end{align}
where $B= a/\tau$ and
\begin{align*}
    M_0=\int_0^L (u_x^2 (x,0) + v_x^2 (x,0)) dx.
\end{align*}
Consequently, one has, for all $t \ge 0$,
\begin{align}\label{stab_eq_3}
     \sup_{x \in [0,L]}{\left|u (x,t)\right|}+ \sup_{x \in [0,L]}{\left|v (x,t)\right|}  \le 2 \sqrt{L} M_0 e^{-\frac{Bt}{2}} .
\end{align}

Therefore, the steady state $(u_0,v_0)$ of system \eqref{slime_mod} is linearly stable with respect to the $L^\infty$-norm over the interval $[0,L]$.
\end{theorem}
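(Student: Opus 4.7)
The plan is to carry out a higher-order energy estimate parallel to the $L^2$ argument already given, but using the natural $H^1$ energy $I(t)=\int_0^L(u_x^2+v_x^2)\,dx$ in place of $\int_0^L(u^2+v^2)\,dx$. The two key new ingredients are (i) multiplying by $-u_{xx}$ and $-v_{xx}$ rather than $u$ and $v$, so that the damping produced by the Laplacian enters at the $H^2$ level and can then be transferred back to the $H^1$ level via Poincar\'e, and (ii) observing that the boundary data $u(0,t)=v(0,t)=0$ and $u_x(L,t)=v_x(L,t)=0$ together with their time derivatives are exactly what is needed for the boundary terms in integration by parts to vanish. In particular, differentiating $u(0,t)=0$ in $t$ gives $u_t(0,t)=0$, so $[u_t\,u_x]_0^L=u_t(L,t)\cdot 0 - 0\cdot u_x(0,t)=0$, and similarly for $v$.

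\textbf{Step 1: Energy identity.} Multiply \eqref{pert_u_stab} by $-u_{xx}$ and \eqref{pert_v_stab} by $-v_{xx}$, integrate over $(0,L)$, and integrate by parts in the $u_t\,u_{xx}$ and $v_t\,v_{xx}$ terms (boundary contributions vanish by the remark above). After using that $\int_0^L u\,v_{xx}\,dx=-\int_0^L u_x v_x\,dx$ and $\int_0^L v\,v_{xx}\,dx=-\int_0^L v_x^2\,dx$ (again with vanishing boundary terms), adding the two identities yields
\begin{align*}
\frac{\tau}{2}\frac{d I}{dt}+\frac{\mu}{2}\!\int_0^L u_{xx}^2\,dx+\frac{D}{2}\!\int_0^L v_{xx}^2\,dx
-\frac{u_0}{v_0}\beta\!\int_0^L u_{xx}v_{xx}\,dx
-a\!\int_0^L u_x v_x\,dx+d\!\int_0^L v_x^2\,dx=0.
\end{align*}

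\textbf{Step 2: Absorb the two cross terms.} For the $u_{xx}v_{xx}$ term, Cauchy--Schwarz gives $\tfrac{u_0}{v_0}\beta |u_{xx}v_{xx}|\le \tfrac{u_0\beta}{2v_0}(u_{xx}^2+v_{xx}^2)$, and by \eqref{mu_D_stab} this is bounded by $\tfrac{\mu}{4}u_{xx}^2+\tfrac{D}{4}v_{xx}^2$, so at least $\tfrac{\mu}{4}\!\int u_{xx}^2+\tfrac{D}{4}\!\int v_{xx}^2$ survives. For the $a\,u_xv_x$ term, AM--GM yields $a\,u_x v_x\le\tfrac{a}{2}u_x^2+\tfrac{a}{2}v_x^2$. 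Applying Poincar\'e's inequality \eqref{poin} to $u_x$ and $v_x$ (valid because $u_x(L,t)=v_x(L,t)=0$) gives $\int u_{xx}^2\ge C_p\!\int u_x^2$ and $\int v_{xx}^2\ge C_p\!\int v_x^2$. Combining,
\begin{align*}
\frac{\tau}{2}\frac{d I}{dt}\le-\Bigl(\frac{C_p\mu}{4}-\frac{a}{2}\Bigr)\!\int_0^L u_x^2\,dx-\Bigl(\frac{C_p D}{4}+d-\frac{a}{2}\Bigr)\!\int_0^L v_x^2\,dx.
\end{align*}
By \eqref{mu_D2_stab}, both parenthesized coefficients are at least $a/2$, so $\tfrac{\tau}{2}\tfrac{dI}{dt}\le -\tfrac{a}{2}I$, i.e.\ $\tfrac{dI}{dt}\le -B\,I$ with $B=a/\tau$.

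\textbf{Step 3: Grönwall and the $L^\infty$ bound.} Integrating the differential inequality gives $I(t)\le I(0)\,e^{-Bt}$, which is \eqref{stab_eq_2} with $M_0$ as defined. For \eqref{stab_eq_3}, since $u(0,t)=v(0,t)=0$, Lemma~\ref{cp} yields
\begin{align*}
\sup_{x\in[0,L]}|u(x,t)|+\sup_{x\in[0,L]}|v(x,t)|\le \sqrt{L}\Bigl(\|u_x(\cdot,t)\|_{L^2}+\|v_x(\cdot,t)\|_{L^2}\Bigr)\le 2\sqrt{L}\,I(t)^{1/2}\le 2\sqrt{L}\,M_0\,e^{-Bt/2},
\end{align*}
where the middle inequality uses $\|u_x\|_{L^2}+\|v_x\|_{L^2}\le 2(\|u_x\|_{L^2}^2+\|v_x\|_{L^2}^2)^{1/2}$.

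The main obstacle is the bookkeeping of the two cross terms: the $u_{xx}v_{xx}$ cross term needs to be dominated by the \emph{diffusion} terms using \eqref{mu_D_stab}, while the $u_xv_x$ cross term then has to be dominated by what is \emph{left over} after Poincar\'e is applied, using \eqref{mu_D2_stab}. Getting the two absorptions to line up so that exactly $-\tfrac{a}{2}I$ remains (and no more) is the delicate point; the factor $B=a/\tau$ in the decay rate is essentially what is left after these two successive applications of Young's inequality.
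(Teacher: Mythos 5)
Your proof is correct and follows essentially the same route as the paper's: testing with $-u_{xx}$ and $-v_{xx}$, absorbing the two cross terms via Young's inequality together with \eqref{mu_D_stab} and \eqref{mu_D2_stab}, applying Poincar\'e's inequality to $u_x$ and $v_x$ (using $u_x(L,t)=v_x(L,t)=0$), and concluding by Gr\"onwall and Lemma~\ref{cp}. The one blemish --- Gr\"onwall gives $I(t)\le M_0 e^{-Bt}$ rather than the stated $M_0^2e^{-Bt}$, so your step $I(t)^{1/2}\le M_0e^{-Bt/2}$ implicitly needs $M_0\ge 1$ --- is shared verbatim by the paper's own proof.
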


\begin{proof}
Multiplying Eq.~\eqref{pert_u_stab} by $-u_{xx}$ and Eq.~\eqref{pert_v_stab} by $-v_{xx}$ and integrating in $x$ over $(0,L)$, with the use of boundary conditions \eqref{bc1} and \eqref{bc2}, we have
\begin{align}
    \frac{\tau}{2} \frac{d }{d t}\int_0^L u_x^2 dx+\frac{\mu}{2} \int_0^L u_{xx}^2 dx - \frac{u_0}{v_0} \beta \int_0^L u_{xx} v_{xx} dx =0,\label{u_x1}\\
    \frac{\tau}{2} \frac{d }{d t} \int_0^L v_x^2 dx+ \frac{D}{2} \int_0^L v_{xx}^2 dx- a \int_0^L u_x v_x dx+d \int_0^L v_x^2 dx=0\label{v_x1}.
\end{align}

By the Cauchy inequality, 
\begin{align}\label{cauchy}
 \left|u_{xx} v_{xx}\right| \le \frac{1}{2} \left(u_{xx}^2+ v_{xx}^2\right),\quad \left|u_{x} v_{x}\right| \le \frac{1}{2} \left(u_{x}^2+ v_{x}^2\right).
\end{align}

Summing up Eqs.~\eqref{u_x1} and \eqref{v_x1} and using \eqref{cauchy}, we derive
\begin{align}
    &\tau \frac{d }{d t}\left(\int_0^L u_x^2 dx+ \int_0^L v_x^2 dx\right)+ \left(\mu- \frac{u_0}{v_0} \beta\right) \int_0^L u_{xx}^2 dx + \left(D- \frac{u_0}{v_0} \beta\right) \int_0^L v_{xx}^2 dx \nonumber \\
    & \quad - a \int_0^L u_x^2 dx + (2d-a) \int_0^L v_x^2 dx\le 0.
\end{align}

Applying the Poincar\'e's inequality to $u_x$ and $v_x$ with the boundary conditions $u_x(L,t)= v_x(L,t)=0$ yields
\begin{align*}
    \int_0^L u_{xx}^2(x,t) dx \ge C_p \int_0^L u_x^2(x,t) dx, \quad \int_0^L v_{xx}^2(x,t) dx \ge C_p \int_0^L v_x^2(x,t) dx.
\end{align*}

Note also 
\begin{align*}
    \mu- \frac{u_0}{v_0} \beta>0, \quad D- \frac{u_0}{v_0} \beta>0.
\end{align*}

It follows that
\begin{align*}
   &\tau \frac{d }{d t}\left(\int_0^L u_x^2 dx+ \int_0^L v_x^2 dx\right)+ \left(\mu C_p- \frac{u_0 C_p}{v_0} \beta-a \right) \int_0^L u_{x}^2 dx \\
   & \quad + \left(DC_p +2d- \frac{u_0 C_p}{v_0} \beta -a\right) \int_0^L v_{x}^2 dx \le 0.
\end{align*}

We have 
\begin{align*}
    &\mu C_p- \frac{u_0 C_p}{v_0} \beta-a \ge \mu C_p- \frac{1}{2}\mu C_p- a = \frac{1}{2}\mu C_p- a \ge 2a-a = a,\\
    &DC_p +2d- \frac{u_0 C_p}{v_0} \beta -a \ge DC_p +2d- \frac{1}{2} D C_p -a \nonumber \\
    & \quad = \frac{1}{2}(D C_p+ d)+ \frac{3}{2}d -a \ge 2a -a =a.
\end{align*}

Hence, we obtain
\begin{align*}
   &\tau \frac{d }{d t}\int_0^L \left(u_{x}^2 + v_{x}^2\right) dx+ a \int_0^L \left(u_{x}^2 + v_{x}^2\right) dx \le 0.
\end{align*}

By Gr\"onwall's inequality, we find
\begin{align*}
    I(t) \le I(0) e^{-\frac{a}{\tau}t}, \text{     for all $t\ge 0$ }
\end{align*}
with $I(t) = \int_0^L \left(u_x^2 (x,t)+ v_x^2 (x,t)\right) dx$. Therefore, we obtain \eqref{stab_eq_2}.

Then the estimate \eqref{stab_eq_3} follows by the use of inequality \eqref{poinl}. 
\end{proof}

Using the Lemma \ref{c_p}, the stability conditions \eqref{mu_D_stab} and \eqref{mu_D2_stab} can be rewritten as
\begin{subequations}\label{stab_cond}
\begin{align}
    \frac{v_0}{2 u_0} \frac{\mu}{\beta}\ge 1 \text{     and     }  \frac{v_0}{2 u_0} \frac{D}{\beta} \ge 1,\label{stab1}\\
    \frac{\mu}{2 a L^2} \ge 1 \text{     and     } \frac{D}{2 a L^2} + \frac{d}{2 a} \ge 1.\label{stab2}
\end{align}
\end{subequations}

The first condition \eqref{stab1} indicates that $\mu$ and $D$ must dominate the attraction of bacteria towards the attractor. It makes sense since amoeba gets attracted to the high concentration of substrate. As a large $D$ smoothing out the high concentration, it results in a small attraction, $\beta$, towards the substrate. Also, a large $\mu$ opposes the accumulation of bacteria and heads toward the direction of the presence of a chemical substrate. 

On the other hand, the second condition \eqref{stab2} illustrates that, for a stable condition, $\mu$ and $D$ must be larger than the production rate $a$. Also, a rapid degradation in substrate level, $d$, is needed to outweigh the production $a$. And lastly, the size of the domain, $L$, should be small enough so that diffusion of bacteria and substrate can spread over the whole space. 


\section{Conclusion} In this chapter, we have shown a way of incorporating Einstein's Brownian motion model in deducing a Keller-Segel-type chemotactic system. The systems with limited and unlimited substrates both have traveling band solutions. The linear stability analysis reveals that a small value of $D$ or/and $\mu$ compared to the substrate production instigates aggregation or instability of the system. On the other hand, the chemotactic attraction of the cell is expected to be higher for stability. Degradation in substrate level also brings stability. Finally, a bigger length of domain supports instability.


\section*{Acknowledgments}
We thank Prof.\ Dr.\ Luan~Hoang for his contributions in to writing out theorems 2 and 3 and stimulating discussion.


\nocite{*}
\bibliographystyle{abbrv}
\bibliography{references}

\begin{thebibliography}{10}

\bibitem{Adleramino}
Adler.
\newblock Effect of amino acids and oxygen on chemotaxis in escherichia coli.
\newblock {\em Journal of bacteriology}, 92(1):121--129, 1966.

\bibitem{Adlerchemo}
Adler.
\newblock Chemotaxis in bacteria.
\newblock {\em Annual Reviews Biochemistry}, 44:341--356, 1975.

\bibitem{Adler708}
J.~Adler.
\newblock Chemotaxis in bacteria.
\newblock {\em Science}, 153(3737):708--716, 1966.

\bibitem{AdlerDahl}
J.~ADLER and M.~M. DAHL.
\newblock A method for measuring the motility of bacteria and for comparing
  random and non-random motility.
\newblock {\em Microbiology}, 46(2):161--173, 1967.

\bibitem{rah20}
I.~C. Christov, A.~Ibraguimov, and R.~Islam.
\newblock Long-time asymptotics of non-degenerate non-linear diffusion
  equations.
\newblock {\em J. Math. Phys.}, 61:081505, 2020.

\bibitem{de2012instability}
P.~De~Leenheer, J.~Gopalakrishnan, and E.~Zuhr.
\newblock Instability in a generalized keller--segel model.
\newblock {\em Journal of Biological Dynamics}, 6(2):974--991, 2012.

\bibitem{Einstein05}
A.~Einstein.
\newblock \"{U}ber die von der molekularkinetischen theorie der w\"{a}rme
  geforderte bewegung von in ruhenden fl\"{u}ssigkeiten suspendierten teilchen.
\newblock {\em Ann. Phys. (Leipzig)}, 322:549--560, 1905.

\bibitem{fu2016instability}
S.~Fu, G.~Huang, and B.~Adam.
\newblock Instability in a generalized multi-species keller--segel chemotaxis
  model.
\newblock {\em Computers \& Mathematics with Applications}, 72(9):2280--2288,
  2016.

\bibitem{gobbetti2007cell}
M.~Gobbetti, M.~De~Angelis, R.~Di~Cagno, F.~Minervini, and A.~Limitone.
\newblock Cell--cell communication in food related bacteria.
\newblock {\em International journal of food microbiology}, 120(1-2):34--45,
  2007.

\bibitem{horstmann20031970}
D.~Horstmann.
\newblock From 1970 until present: the {K}eller--{S}egel model in chemotaxis
  and its consequences. ii, jahresber.
\newblock {\em Deutsch. Math.-Verein.}, 106:51--69, 2004.

\bibitem{horstmann2011generalizing}
D.~Horstmann.
\newblock Generalizing the {K}eller--{S}egel model: Lyapunov functionals,
  steady state analysis, and blow-up results for multi-species chemotaxis
  models in the presence of attraction and repulsion between competitive
  interacting species.
\newblock {\em Journal of nonlinear science}, 21:231--270, 2011.

\bibitem{rah}
R.~Islam and A.~Ibragimov.
\newblock Einstein's brownian motion model for chemotactic system and traveling
  band.
\newblock {\em arxiv:2201.12980}, 2022.

\bibitem{KELLER1970399}
E.~F. Keller and L.~A. Segel.
\newblock Initiation of slime mold aggregation viewed as an instability.
\newblock {\em Journal of Theoretical Biology}, 26(3):399--415, 1970.

\bibitem{KELLER1971225}
E.~F. Keller and L.~A. Segel.
\newblock Model for chemotaxis.
\newblock {\em Journal of Theoretical Biology}, 30(2):225--234, 1971.

\bibitem{KELLER1971235}
E.~F. Keller and L.~A. Segel.
\newblock Traveling bands of chemotactic bacteria: A theoretical analysis.
\newblock {\em Journal of Theoretical Biology}, 30(2):235--248, 1971.

\bibitem{lin1988large}
C.-S. Lin, W.-M. Ni, and I.~Takagi.
\newblock Large amplitude stationary solutions to a chemotaxis system.
\newblock {\em Journal of Differential Equations}, 72(1):1--27, 1988.

\bibitem{crowd}
M.~Matsushita, F.~Hiramatsu, N.~Kobayashi, T.~Ozawa, Y.~Yamazaki, and
  T.~Matsuyama.
\newblock Colony formation in bacteria: experiments and modeling.
\newblock {\em Biofilms}, 1(4):305--317, 2004.

\bibitem{quorum01}
M.~B. Miller and B.~L. Bassler.
\newblock Quorum sensing in bacteria.
\newblock {\em Annual Review of Microbiology}, 55(1):165--199, 2001.
\newblock PMID: 11544353.

\bibitem{romanczuk2008beyond}
P.~Romanczuk, U.~Erdmann, H.~Engel, and L.~Schimansky-Geier.
\newblock Beyond the keller-segel model: Microscopic modelling of bacterial
  colonies.
\newblock {\em The European Physical Journal Special Topics}, 157:61--77, 2008.

\bibitem{schaaf1985stationary}
R.~Schaaf.
\newblock Stationary solutions of chemotaxis systems.
\newblock {\em Transactions of the American Mathematical Society},
  292(2):531--556, 1985.

\bibitem{skor}
A.~Skorokhod.
\newblock Basic principles and applications of probability theory.
\newblock {\em Springer Berlin, Heidelberg}, page 282, 2005.

\bibitem{akifpap1}
H.~Somaweera, A.~Ibraguimov, and D.~Pappas.
\newblock A review of chemical gradient systems for cell analysis.
\newblock {\em Analytica Chimica Acta}, 907:7--17, 2016.

\bibitem{stevens2000derivation}
A.~Stevens.
\newblock The derivation of chemotaxis equations as limit dynamics of
  moderately interacting stochastic many-particle systems.
\newblock {\em SIAM Journal on Applied Mathematics}, 61(1):183--212, 2000.

\bibitem{stevens1997aggregation}
A.~Stevens and H.~G. Othmer.
\newblock Aggregation, blowup, and collapse: the abc's of taxis in reinforced
  random walks.
\newblock {\em SIAM Journal on Applied Mathematics}, 57(4):1044--1081, 1997.

\bibitem{chem}
M.~Tindall, P.~Maini, S.~Porter, and J.~Armitage.
\newblock Overview of mathematical approaches used to model bacterial
  chemotaxis {II}: bacterial populations.
\newblock {\em Bull Math Biol}, 70(6):1570--607, 2008.

\bibitem{wang2016qualitative}
Q.~Wang, J.~Yan, and C.~Gai.
\newblock Qualitative analysis of stationary keller--segel chemotaxis models
  with logistic growth.
\newblock {\em Zeitschrift f{\"u}r angewandte Mathematik und Physik}, 67(3):51,
  2016.

\bibitem{wang2013spiky}
X.~Wang and Q.~Xu.
\newblock Spiky and transition layer steady states of chemotaxis systems via
  global bifurcation and {H}elly’s compactness theorem.
\newblock {\em Journal of mathematical biology}, 66(6):1241--1266, 2013.

\end{thebibliography}

\end{document}